\theoremstyle{plain}
\newtheorem{lem}{Lemma}[section]
\newtheorem{cor}[lem]{Corollary}
\newtheorem{prop}[lem]{Proposition}
\newtheorem{thm}[lem]{Theorem}
\theoremstyle{definition}
\newtheorem{defn}[lem]{Definition}
\newtheorem{ex}[lem]{Example}
\newtheorem{disc}[lem]{Remark}
\newtheorem{notation}[lem]{Notation}
\newtheorem*{convention*}{Convention}
\newcommand{\card}{\operatorname{card}}
\renewcommand{\leq}{\leqslant}
\numberwithin{equation}{lem}
\newcommand{\al}{\alpha}
\newcommand{\be}{\beta}
\newcommand {\PP}{\mathbb{P}}
\newcommand {\QQ}{\mathbb{Q}}
\newcommand {\AAA}{\mathbb{A}}
\newcommand{\qbinom}[2]{{\left[\begin{smallmatrix}  #1  \\ #2  \end{smallmatrix}\right]_q}}
\newcommand{\cal}{\mathcal}
\newcommand {\FF}{\mathbb{F}}
\newcommand{\pa}{{\partial}}
\newcommand{\Ann}{{\rm Ann}}
\begin{document}

\bibliographystyle{amsplain}

\title[Determinants of incidence and Hessian matrices]{Determinants of incidence and Hessian matrices arising from the vector space lattice}

\author{Saeed Nasseh}
\address{Department of Mathematical Sciences\\
Georgia Southern University\\
Statesboro, GA 30460, USA}
\email{snasseh@georgiasouthern.edu}
\author{Alexandra Seceleanu}
\address{Department of Mathematics\\
University of Nebraska\\
Lincoln, NE 68588-0130, USA}
\email{aseceleanu@unl.edu}
\author{Junzo Watanabe}
\address{Department of Mathematics\\
Tokai University \\
 Hiratsuka 259-1292, Japan}
\email{watanabe.junzo@tokai-u.jp}


\date{\today}


\keywords{vector space lattice, incidence matrix, Hessian,  strong Lefschetz property, Gorenstein algebras, finite geometry}
\subjclass[2000]{Primary: 05B20, 
05B25, 
51D25; 
Secondary:  13A02. 
}

\begin{abstract}
Let $\mathcal{V}=\bigsqcup _{i=0}^n\mathcal{V}_i$ be the lattice of subspaces of the $n$-dimensional vector space 
over the finite field  $\FF _q$ and  
let $\mathcal{A}$ be the graded Gorenstein algebra defined over $\QQ$ which has $\mathcal{V}$  
as a $\QQ$ basis.  Let  $F$  be the Macaulay dual generator for $\mathcal{A}$.  
We compute explicitly the Hessian determinant  $|\frac{\pa ^2F}{\pa X_i \pa X_j}|$ evaluated at the point 
$X_1 = X_2 = \cdots = X_N=1$ and relate it to the determinant of the incidence matrix between $\mathcal{V}_1$ and 
$\mathcal{V}_{n-1}$. Our exploration is motivated by the fact that both of these matrices arise naturally in the study of the 
Sperner property of the lattice  and the  Lefschetz property for the graded Artinian Gorenstein algebra associated to it.   
\end{abstract}

\maketitle

\section{Introduction}


Let $P$ be a poset with a rank function $\rho:P \to \mathbb{N}$.
Then $P$ decomposes into a disjoint union of the level sets, namely $P=\bigsqcup _{i=0}^cP_i$, where $P_i=\{x \in P \mid \rho(x)=i\}$. We say that $P$ has the  Sperner property if the maximum size of antichains of $P$ is equal to the maximum of the rank numbers $|P_i|$. Some of the basic examples of finite ranked posets known to have the Sperner property  are the Boolean lattice, the divisor lattice, and the vector space lattice over a finite field. One way to show that the Sperner property holds for the vector space lattice is as consequence of the fact that certain incidence matrices have full rank as illustrated in \cite[Theorem 1.83]{HMMNWW}. We will say that a ranked poset with a symmetric sequence of rank numbers has the strong Lefschetz property if the incidence matrices between every pair of symmetric level sets are invertible. This implies the Sperner property for posets with symmetric sequence of rank numbers by \cite[Lemmas 1.51, 1.52]{HMMNWW}. For  the vector space lattice, the fact  that it has the strong Lefschetz property follows from a result of Kantor \cite{Ka}.  There are several other ways to show that the vector space lattice has the Sperner property; the reader may consult \cite{GK}  for details.

It is remarkable that some posets with a rank function can be  vector space bases for some graded Artinian algebras over a field in such a way that the 
multiplication of the algebra is compatible with the incidence structure of the poset. For example the Boolean lattice $2^{\{x_1, \cdots, x_n\}}$ can be the basis 
for  the algebra \[K[x_1,x_2, \cdots, x_n]/(x_1^2, x_2 ^2, \cdots, x_n^2).\] 
Recently Maeno and Numata~\cite{MN} succeeded in constructing a family of algebras over a field for which vector space lattices are the bases. 
To explain briefly their construction,  let $\FF _q$ be the finite field with $q$ elements, 
$V=\FF_q ^n$  the $n$-dimensional vector space and  $\cal{V}=\bigsqcup _{i=0}^n\cal{V}_i$ the vector space lattice with rank decomposition.   
Introduce as many variables as the number of the one dimensional subspaces of $V$ and then define the form 
\[F=\sum x_{i_1}x_{i_2}\cdots x_{i_n},\]
where the indices run over the combinations such that span 
$\langle x_{i_1}, x_{i_2}, \cdots, x_{i_n}\rangle$ is the whole space $V$.  
(A variable like $x_i$ represents a one dimensional vector subspace of $V$ and distinct variables represent distinct spaces.) 
Let $R=K[x_1, \cdots, x_N]$ be the polynomial ring in $N$ variables, where $N$ is the number of one dimensional subspaces of $V$. 
(Note $K$ is any field and should not be confused with  $\FF _q$.)
Set $\cal{A}=R/\Ann (F)$.  The Artinian algebra $\cal{A}$ has the Hilbert function displayed below
\[\left({n \atopwithdelims[] 0}_q, {n \atopwithdelims[] 1}_q, \cdots, {n \atopwithdelims[] n}_q \right).\]
An explicit formula for  ${n \atopwithdelims[] i}_q$  is given in the beginning of section 2.
Every  monomial $m$ in $\cal{A}$ represents a vector subspace in $V$ of the dimension which is equal to the degree of $m$.   

We are interested in the Hessian determinant $|\frac{\pa ^2F}{\pa x_i \pa x_j}|$ 
of $F$ evaluated at 
$x_1 = \cdots = x_N = 1$.  The motivation for it is as follows: 
 it is proven in \cite{refMW} that the non-vanishing of the Hessian, together with the non-vanishing of the  higher Hessians of the Macaulay dual generator $F$ (i.e., $| \frac{\pa ^{2k}F}{\pa x_{i_1} \ldots \pa x_{i_k}\pa x_{j_1} \ldots \pa x_{j_k}}|$)  is equivalent to the strong Lefschetz property for the  Gorenstein  algebra (Definition \ref{def Lefschetz}), which ensures the Sperner property of the poset.   
This suggests that a connection exists between the higher Hessians evaluated at a certain point $(x_i)$ and the determinants 
of the incidence matrices for the vector space lattice. (Recall that the first Hessian of $F$ is the Hessian in the usual sense.)
Our main result is Theorem~\ref{main2}, where we make  explicit the relation between the Hessian matrix and the incidence matrix of 
the vector space lattice and we derive from it a closed formula in Corollary~\ref{det Hessian}  for the Hessian of $F$ 
evaluated at $x_1 = \cdots = x_N=1$.   
  
In the literature, efforts have been made to obtain the Smith normal form of incidence matrices for various posets (\cite{Si}). In particular the Smith normal form for the incidence matrix between the sets $\cal{V}_1$ and $\cal{V}_{n-1}$  is obtained by Xiang~\cite{Xi}. The determinant itself is much easier to obtain; it is enough to notice that 
\[A\;{}^{T}A = (N-\lambda )I + \lambda J,\] 
where $I$ is the $N \times N$ identity matrix and $J$ is the matrix with all $1$ as entries. 
This is due to Xiang~\cite[(1.1)]{Xi}. In this paper we reproduce a proof for it since this does not seem to be well known among the commutative algebraists (Theorem \ref{main1} (c)).     

Computations similar in spirit  have been performed for evaluating the  
determinants of all incidence matrices of the  Boolean lattice 
in \cite{Pr} and \cite{HW}, obtaining explicit and recursive formulas 
respectively. For a comprehensive survey of determinant evaluations and 
their many applications see \cite{Kr}.

Our paper is organized as follows: in Section \ref{sect2} we gather useful properties of the vector space lattice, 
focusing  on enumerative results. 
In Section \ref{sect3} we carry out our computation of the determinant of the incidence matrix between the first  level set and the 
$(n-1)$st level set. 
In Section \ref{sect4} we recall Maeno--Numata's construction of the graded Artinian Gorenstein  algebra $\cal{A}$ associated with the 
vector space lattice as introduced in \cite{MN}. We explicitly describe the Hessian matrix of the Macaulay dual generator of $\cal{A}$ and 
we compute the Hessian determinant.   Furthermore, we show that the same method can be used to obtain the determinant 
for the multiplication map 
$\times L : \cal{A}_1 \to \cal{A}_{n-1}$, where $L:=\sum _{j=1}^Nx_j$, and the matrix is written with respect to the monomial bases.

\section{The vector space lattice}\label{sect2}
Throughout this paper, let $\FF$ be the finite field  with $q$ elements and let $n$ be a positive integer.

\begin{defn}
The \emph{vector space lattice} on $\FF^n$, denoted ${\mathcal V}(n,q)$, is
the set of all subspaces of $\FF^n$ naturally ordered by inclusion.
Note that ${\mathcal V}(n,q)$ is a poset with the \emph{rank function} $\rho$ defined by $\rho(W)=\dim_{\FF}(W)$, for each $W\in {\mathcal V}(n,q)$.
This gives rise to the \emph{rank decomposition}   ${\mathcal V}(n,q)=
\bigsqcup _{j=0}^n{\mathcal V}_j$  into \emph{level sets}
${\mathcal V}_j:=\{ W \in {\mathcal V}(n,q) \ \mid\ \dim_{\FF}(W) = j\}$.
\end{defn}

Using the notation  $[i]=(q^i-1)/(q-1)$  for the $q$-integers, we recall the formula for the sizes of the level sets in  the vector space lattice (see \cite[Proposition 1.81]{HMMNWW}):
\begin{equation*}\label{levelcount}
\card({\mathcal V}_j)=\left[\begin{matrix} n \\ j  \end{matrix}  \right]_q, \mbox{ where }
\left[\begin{matrix} n \\ j  \end{matrix}  \right]_q=
\begin{cases}
\frac{[n][n-1]\ldots [n-j+1]}{[j][j-1]\ldots [1]} & (0\leq j\leq n)\\
0 & (j<0 \mbox{ or } j>n).
\end{cases}
\end{equation*}

Let $G(n,m)$ denote the Grassmannian variety  of $m$-dimensional subspaces of an $n$-dimensional vector space. One reason for studying the vector space lattice ${\cal V}$ is that  each level set ${\cal V}_j$ may be regarded as the set of rational points of the Grassmannian variety $G(n, j)$ corresponding to a finite vector space.
In our work we routinely identify the set ${\cal V}_j$ as the collection of
$n \times j$ matrices in echelon form with entries in $\FF$.
For example, for $n=4$ and $j=2$, the set ${\cal V}_j$ is in one-one
correspondence with the set
$$
\begin{aligned}
\Big\{\begin{pmatrix} 1&0&*&* \\ 0&1&*&* \end{pmatrix},
\begin{pmatrix} 1&*&0&* \\ 0&0&1&* \end{pmatrix},&
\begin{pmatrix} 1&*&*&0 \\ 0&0&0&1 \end{pmatrix},\\
\begin{pmatrix} 0&1&0&* \\ 0&0&1&* \end{pmatrix},&
\begin{pmatrix} 0&1&*&0 \\ 0&0&0&1 \end{pmatrix},
\begin{pmatrix} 0&0&1&0 \\ 0&0&0&1 \end{pmatrix}\Big\}
\end{aligned}
$$
where each echelon form corresponds to the subspace spanned by the rows of the respective matrix.

For $W \in {\cal V}(n, q)$, define the \emph{dual subspace} $W^\perp \in {\cal V}(n, q)$ by
$$W^\perp=\left\{ w\in \FF^n\ \mid\ \sum_{i=1}^n v_iw_i=0,\ \text{for all}\ v\in W\right\}.$$
The map ${\cal V}(n, q)\to {\cal V}(n, q)$ given by $W\mapsto W^\perp$  is an inclusion-reversing bijection meaning that it satisfies the condition: $U \subseteq W$ if and only if $W^\perp \subseteq U^\perp$.

 Focusing on the level sets of elements of rank 1 and $n-1$, respectively, the formula for the sizes of the level sets  gives
$\card({\cal V}_1) = \card({\cal V}_{n-1})=\left[\begin{smallmatrix}  n \\ 1 \end{smallmatrix}\right]_q$.
Set $N=\card({\cal V}_1)$
and fix the following notation for elements of the level set   ${\cal V}_1$:
\[{\cal V}_1 =\{v_1, v_2, \ldots , v_N \}.\]
In particular, the set ${\cal V}_1$ is in one-one correspondence with the
rational points of the  projective space $\PP ^{n-1} _{\FF}$.  Thus it will be convenient to regard
${\cal V}_1$ as the set of vectors $(a_1, \ldots, a_n)$ such that
the first nonzero component is $1$.  These vectors are a special case of the echelon matrices described above. Since $\PP ^{n-1}_{\FF} = \PP ^{n-2}_{\FF} \sqcup \AAA ^{n-1}_{\FF}$, we have the identity $N = \left[\begin{smallmatrix}  n \\ 1 \end{smallmatrix}\right]_q= \left[\begin{smallmatrix}  n-1 \\ 1 \end{smallmatrix}\right]_q +q^{n-1}$.

We denote by $v_k ^{\perp}$ the dual space of $\FF v_k$, which  allows us to identify the $(n-1)$-st level set of the vector space lattice with the set of duals of elements of the first level set as follows:
\[{\cal V}_{n-1}=\{v_1 ^{\perp} , v_2 ^{\perp}, \ldots, v_N^{\perp}\}.\]

The following definition introduces the focal point of our attention in this work.
\begin{defn}\label{defA}
The \emph{incidence matrix} $A=(a_{ij})$ for ${\cal V}_1$  and ${\cal V}_{n-1}$ is the $N \times  N$ matrix whose entries are
$$
a_{ij}=
\begin{cases}
1  &(v_i \in v_j^{\perp})\footnote{} \\
0 &(v_i \not \in v_j^{\perp}).
\end{cases}
$$
\end{defn}
\footnotetext{Throughout this article, we write $v_i \in v_j^{\perp}$ rather than $v_i \subset v_j^{\perp}$ because we prefer to think of of $v_i$ as vectors rather than subspaces of $V$, via a canonical identification explained above.}

The first goal of this note is to find a closed formula for the determinant of the incidence matrix $A$. While our vector space lattice is defined over a field of positive characteristic, all of our determinant computations will be performed in characteristic zero. This is to preserve the enumerative properties of the entries in our matrices. Note that the truly meaningful invariant of the incidence structure between ${\cal V}_1$  and ${\cal V}_{n-1}$ is in fact the absolute value of this determinant, denoted $\left| \det A \right|$, since this is preserved under permuting the order of the elements in ${\cal V}_1$  and ${\cal V}_{n-1}$.

We begin by describing the incidence matrix in a concrete example.
\begin{ex}\label{ex1}
Let $q=2$ and $n=3$. In this case we have $N=7$. Then ${\mathcal V}_1 =\{v_1, v_2, \ldots , v_7 \}$, in which
\begin{gather}
\begin{align}
v_1=(0,0,1)&
&v_2=(0,1,0)&
&v_3=(0,1,1)&
&v_4=(1,0,0)
\notag
\end{align}
\\
\begin{align}
&v_5=(1,0,1)&
&v_6=(1,1,0)&
&v_7=(1,1,1).&\notag
\end{align}
\end{gather}
Now we have ${\mathcal V}_2={\mathcal V}_1^{\perp}=\{u_1, u_2, \ldots , u_7\}$, where
\begin{align}
u_1:=v_1^{\perp}=\begin{pmatrix} 1&0&0 \\ 0&1&0 \end{pmatrix}&
&u_2:=v_2^{\perp}=\begin{pmatrix} 1&0&0 \\ 0&0&1 \end{pmatrix}&
&u_3:=v_3^{\perp}=\begin{pmatrix} 1&0&0 \\ 0&1&1 \end{pmatrix}
\notag
\\
u_4:=v_4^{\perp}=\begin{pmatrix} 0&1&0 \\ 0&0&1 \end{pmatrix}&
&u_5:=v_5^{\perp}=\begin{pmatrix} 1&0&1 \\ 0&1&0 \end{pmatrix}&
&u_6:=v_6^{\perp}=\begin{pmatrix} 1&1&0 \\ 0&0&1 \end{pmatrix}\notag
\\
&
&u_7:=v_7^{\perp}=\begin{pmatrix} 1&0&1 \\ 0&1&1 \end{pmatrix}.&
&\notag
\end{align}
Therefore we can compute the incidence matrix $A$ as displayed below, which gives $\det(A)=- 3\cdot 2^3$ and
$$
A=\begin{pmatrix} 0&1&0&1&0&1&0 \\ 1&0&0&1&1&0&0 \\ 0&0&1&1&0&0&1 \\ 1&1&1&0&0&0&0 \\ 0&1&0&0&1&0&1\\ 1&0&0&0&0&1&1\\ 0&0&1&0&1&1&0\end{pmatrix}.
$$
\end{ex}

For later use in our computations, we record a few enumerative invariants of the lattice ${\cal V}(n,q)$. We employ the notation $\card$ for the cardinality of a finite set.

 \begin{prop}  \label{the number of paths}
 The following enumerative identities hold true:
\begin{enumerate}[\rm(a)]
\item
$\card({\rm GL}(n, \FF))=(q^n-1)(q^n-q^1)(q^n-q^2)\dots (q^n- q^{n-1})$.
\item
The number of  ordered $n$-tuple subsets of $\cal V_1$ which form bases for $\FF^n$
is
\[t_{n,q}= \frac{\card({\rm GL}(n, \FF))}{(q-1)^n}=\left(q^{n(n-1)/2}\right)\left(\prod_{k=1} ^n \left[\begin{matrix}k\\1 \end{matrix}\right]_q\right).\]
\item
The number of   $n$-tuple subsets of $\cal V_1$ which form bases for $\FF^n$
is
\[s_{n,q}= \frac{\card({\rm GL}(n, \FF))}{n!(q-1)^n}=\left(\frac{q^{n(n-1)/2}}{n!}\right)\left(\prod _{k=1} ^n \left[\begin{matrix}k\\1 \end{matrix}\right]_q\right).\]
\item
The number of  ordered $n$-tuple subsets of $\cal V_1$ which form bases for $\FF^n$ and contain a fixed linearly independent ordered subset of size $j$ is
\[t_{n,j,q}=\left(q^{\left(n(n-1)-j(j-1)\right)/2}\right)\left(\prod _{k=1} ^{n-j} \left[\begin{matrix}k\\1 \end{matrix}\right]_q\right).\]
\item
The number of  $n$-tuple subsets of $\cal V_1$ which form bases for $\FF^n$and contain a fixed linearly independent subset of size $j$ is
\[s_{n,j,q}=\left(\frac{q^{(n(n-1)-j(j-1))/2}}{(n-j)!}\right)\left(\prod _{k=1} ^{n-j} \left[\begin{matrix}k\\1 \end{matrix}\right]_q\right).\]
\item
The number of paths in ${\cal V}(n,q)$ from the minimum element to the maximum element in the vector space lattice of $\FF^n$ is equal to
$$p_{n,q}=\prod _{k=1}^n\left[\begin{matrix}k\\1 \end{matrix}\right]_q.$$
\end{enumerate}
\end{prop}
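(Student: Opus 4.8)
The plan is to prove the six enumerative identities in Proposition \ref{the number of paths} by reducing everything to standard counting of bases in $\FF^n$, and then observing that each subsequent formula is obtained from the first by a routine quotient or a "fixed initial segment" argument. First I would establish (a): an ordered basis of $\FF^n$ is built by choosing a first vector in $\FF^n \ssm \{0\}$ (there are $q^n-1$ choices), then a second vector outside the span of the first ($q^n - q$ choices), and in general the $(k+1)$-st vector outside a $k$-dimensional subspace ($q^n - q^k$ choices); since $\ggl(n,\FF)$ is in bijection with the set of ordered bases, this gives the product formula. For (b), I would pass from ordered bases of $\FF^n$ to ordered $n$-tuples from $\cal V_1$ spanning $\FF^n$: each line in $\cal V_1$ has $q-1$ nonzero vectors, so dividing $\card(\ggl(n,\FF))$ by $(q-1)^n$ counts the ordered tuples of lines. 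The alternative closed form in (b) follows by rewriting $q^n - q^k = q^k(q^{n-k}-1) = q^k (q-1)[n-k]$, collecting the powers of $q$ into $q^{0+1+\cdots+(n-1)} = q^{n(n-1)/2}$, cancelling the $(q-1)^n$, and reindexing $\prod_{k=0}^{n-1}[n-k] = \prod_{k=1}^n [k]$; here I would use the notation $[k] = \left[\begin{smallmatrix} k \\ 1\end{smallmatrix}\right]_q$ consistently. Part (c) is then immediate: unordered $n$-subsets of $\cal V_1$ forming a basis correspond to ordered ones modulo the $n!$ orderings, so $s_{n,q} = t_{n,q}/n!$.

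For (d) and (e) I would argue by the same incremental selection, but starting after a prescribed linearly independent set of $j$ lines has already been chosen. Given a fixed linearly independent $j$-subset, its span is a fixed $j$-dimensional subspace $W$; to extend to an ordered basis one chooses vectors outside the running span, contributing $(q^n-q^j)(q^n-q^{j+1})\cdots(q^n-q^{n-1})$ ordered vector choices, hence $\frac{1}{(q-1)^{n-j}}(q^n-q^j)\cdots(q^n-q^{n-1})$ ordered line choices for the remaining $n-j$ slots. Rewriting each factor as $q^k(q-1)[n-k]$ for $k = j, \ldots, n-1$, the powers of $q$ combine to $q^{j + (j+1) + \cdots + (n-1)} = q^{(n(n-1) - j(j-1))/2}$, the $(q-1)$ factors cancel, and the $q$-integers reindex to $\prod_{k=1}^{n-j}[k]$, giving $t_{n,j,q}$. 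Part (e) follows from (d) by dividing by $(n-j)!$, since the $j$ prescribed lines are fixed and only the remaining $n-j$ lines may be permuted among themselves. (Note these specialize correctly: $t_{n,0,q} = t_{n,q}$ and $t_{n,n,q} = 1$.)

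Finally, for (f), I would identify a maximal chain (path) from $0$ to $\FF^n$ in $\cal V(n,q)$ with a full flag $0 = W_0 \subsetneq W_1 \subsetneq \cdots \subsetneq W_n = \FF^n$, and count flags by the number of ways to choose a compatible ordered basis up to the order within each step. Concretely, the number of ordered bases $(w_1, \ldots, w_n)$ with $W_k = \Span(w_1,\ldots,w_k)$ for all $k$ is, by the same incremental count, $(q^n - q^0)(q^n - q^1)\cdots (q^n-q^{n-1}) / (q-1)^n \cdot$ (a correction); more cleanly, I would count flags directly: the number of $1$-dimensional subspaces of $\FF^n/W_{k-1} \cong \FF^{n-k+1}$ is $[n-k+1]$, so the number of flags is $\prod_{k=1}^n [n-k+1] = \prod_{k=1}^n [k] = p_{n,q}$. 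Alternatively $p_{n,q} = t_{n,q}/q^{n(n-1)/2}$ by part (b), which is a pleasant cross-check.

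The only genuine obstacle here is bookkeeping: making sure the power of $q$ extracted from the products $(q^n - q^k)$ is exactly $n(n-1)/2$ (resp. $(n(n-1)-j(j-1))/2$), and that the reindexing of the $q$-integers lines up, including degenerate cases $j=0$ and $j=n$. Everything else is a standard orbit-counting argument, so I expect the write-up to be short once the algebraic identity $q^n - q^k = q^k(q-1)[n-k]$ is in hand.
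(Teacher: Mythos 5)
Your proposal is correct and follows essentially the same route as the paper's proof: the incremental row-selection count for $\ggl(n,\FF)$, the $(q-1)^n{:}1$ (resp.\ $(q-1)^{n-j}{:}1$) passage from ordered bases to ordered tuples of lines via $q^n-q^k=q^k(q-1)\left[\begin{smallmatrix}n-k\\1\end{smallmatrix}\right]_q$, division by $n!$ (resp.\ $(n-j)!$) for the unordered counts, and the flag count for (f) by enumerating lines in the successive quotients $\FF^n/W_{k-1}$. The cross-check $p_{n,q}=t_{n,q}/q^{n(n-1)/2}$ and the sanity checks $t_{n,0,q}=t_{n,q}$, $t_{n,n,q}=1$ are nice additions not present in the paper.
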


\begin{proof}
(a)  Any nonzero vector can be the first row of an $n \times n$ invertible matrix.
If the first $k$ rows $u_1, \ldots, u_k \in \FF^n$  of an invertible matrix are chosen,
then any vector in $\FF^n \setminus \sum _{i=1}^k \FF u_i$ can be the $(k+1)$-st row for such a matrix.
This proves  the formula inductively for the number of elements in  ${\rm GL}(n, \FF)$.

(b)  We regard such an ordered $n$-tuple of vectors as a matrix $U \in {\rm GL}(n, \FF)$ and we let $u_i$ be the $i$-th row. Then, for each integer $i$,  we may find  a unique  vector $v_{k_i}\in {\cal V}_1$ such that $\FF v_{k_i} = \FF u_i$.  The correspondence $U \mapsto (v_{k_1}, \ldots, v_{k_n})$ is $(q-1)^n :1$, where by $(v_{k_1}, \ldots, v_{k_n})$ we mean the ordered $n$-tuple. This proves that the number of ordered $n$-tuple subsets of $\cal V_1$ which form bases for $\FF^n$  is equal to
\[\frac{(q^n-1)(q^n-q^1)(q^n-q^2)\ldots(q^n-q^{n-1})}{(q-1)^n}.\]
Noting  that \[\frac{q^n-q^k}{q-1}= q^k\frac{q^{n-k}-1}{q-1}=q^k\left[\begin{matrix}n-k\\1 \end{matrix}\right]_q,\]
we may rewrite the expression above as the claimed formula.

(c) This is easily deduced by observing that the correspondence between the ordered tuples of part (b) and the unordered ones is $n! :1$.

(d)  We regard such an ordered $n$-tuple of vectors as a matrix $U \in {\rm GL}(n, \FF)$, where the first $j$ rows are fixed. Similar reasoning as in part (b) yields the following count
$$\frac{(q^n-q^j)(q^n-q^{j+1})\dots (q^n- q^{n-1})}{(q-1)^{n-j}}=\left(q^{\left(n(n-1)-j(j-1)\right)/2}\right)\left(\prod _{k=1} ^{n-j} \left[\begin{matrix}k\\1 \end{matrix}\right]_q\right).$$

(e) The statement follows from (d) because the correspondence between the ordered tuples of part (d) and the unordered ones is $(n-j)! :1$.

(f)  A path  from the minimum element to the maximum element in the lattice ${\cal V}(n,q)$ is a chain
of vector subspaces in $\FF^n$
\[W_0=\FF^0  \subset W_1 \subset W_2  \subset \ldots \subset W_n=\FF^n, \]
with  $W_k \in {\cal V} _k$.
Let $W \in {\cal V} _k$. The number of $(k+1)$-dimensional subspaces in $\FF^n$ which contains $W$ is
$\qbinom{n-k}{1}$,
since this number is the same as the number of linearly independent vectors in $\FF^n/W$, which is $(n-k)$-dimensional.
Hence the assertion follows.
\end{proof}

\section{The determinant of the incidence matrix between  ${\cal V}_1$ and  ${\cal V}_{n-1}$}\label{sect3}

We use the notation fixed in section~\ref{sect2}.
 A recurring theme in our work will be the occurrence of matrices of a special form, for which determinants are relatively easily computed. We find it useful to introduce a uniform notation for these matrices.

\begin{notation}\label{matrix phi}
Let  $\Phi(\nu, \al, \be)$ denote the matrix of size $\nu \times \nu$
with entries
$$
\phi _{ij} =
\begin{cases}
\alpha  &(i=j) \\
\beta &(i \neq j).
\end{cases}
$$
\end{notation}

\begin{lem} \label{lemphi}
\begin{enumerate}[\rm(a)]
\item
The determinant of $\Phi(\nu, \al, \be)$ is given by
$$
\det \Phi(\nu, \al, \be)=(\al -\be)^{\nu -1}(\nu \be +\al -\be).
$$
\item If $\alpha-\beta=\alpha'-\beta'$ then
$$\frac{\det \Phi(\nu, \al, \be)}{\det \Phi(\nu, \al', \be')}=\frac{\nu\be+\al-\be}{\nu\be'+\al'-\be'}.$$
\end{enumerate}
\end{lem}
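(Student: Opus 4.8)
The plan is to prove part (a) first by a direct manipulation of the determinant, and then deduce part (b) as an immediate algebraic consequence.

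For part (a), I would start from the matrix $\Phi(\nu,\al,\be)$, which can be written as $(\al-\be)I_\nu + \be J_\nu$, where $I_\nu$ is the identity and $J_\nu$ is the all-ones matrix. The cleanest route is a row/column reduction: add all rows to the first row, so that the first row becomes $(\nu\be+\al-\be)$ in every entry; factor that scalar out of the first row; then subtract $\be$ times the (new) first row from every other row, which clears all off-diagonal entries below the first row and turns the remaining diagonal entries into $\al-\be$. What is left is an upper-triangular matrix with first diagonal entry $1$ (after the factoring) and the remaining $\nu-1$ diagonal entries equal to $\al-\be$, giving determinant $(\nu\be+\al-\be)(\al-\be)^{\nu-1}$, as claimed. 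An alternative I could mention is the eigenvalue argument: $J_\nu$ has eigenvalue $\nu$ once and $0$ with multiplicity $\nu-1$, so $\Phi$ has eigenvalue $\al-\be+\nu\be$ once and $\al-\be$ with multiplicity $\nu-1$, and the determinant is the product of eigenvalues. Either argument is short; I would present the row-reduction one since it needs no spectral theory and works over any commutative ring.

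For part (b), observe that under the hypothesis $\al-\be=\al'-\be'=:\gamma$, part (a) gives
$$
\det\Phi(\nu,\al,\be)=\gamma^{\nu-1}(\nu\be+\al-\be),\qquad
\det\Phi(\nu,\al',\be')=\gamma^{\nu-1}(\nu\be'+\al'-\be').
$$
Dividing these two expressions, the common factor $\gamma^{\nu-1}$ cancels and one is left precisely with
$$
\frac{\det\Phi(\nu,\al,\be)}{\det\Phi(\nu,\al',\be')}=\frac{\nu\be+\al-\be}{\nu\be'+\al'-\be'},
$$
which is the assertion. (Implicitly one assumes the denominator is nonzero, i.e. $\nu\be'+\al'-\be'\neq 0$, so that the ratio makes sense; this is harmless for the applications.)

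I do not anticipate any real obstacle here: the only mild care needed is to keep track of which row operations change the determinant (the "add a multiple of one row to another" steps do not, and the single scalar factorization is accounted for explicitly). The statement is a standard determinant evaluation, and the point of isolating it as Lemma 1.2 is purely to streamline the incidence- and Hessian-matrix computations later, where matrices of the form $\Phi(\nu,\al,\be)$ (or block versions built from them) recur.
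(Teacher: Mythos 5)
Your proof is correct and follows essentially the same route as the paper, which simply invokes ``convenient row and column operations'' to reduce $\Phi(\nu,\al,\be)$ to an almost diagonal form; your write-up just makes those operations explicit. Part (b) is handled identically in both, as an immediate cancellation from the formula in (a).
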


\begin{proof}
Part (a) follows after performing convenient row and column operations on $\Phi(\nu, \al, \be)$ to transform the matrix to an almost diagonal form. Part (b) then follows from (a).
\end{proof}

\begin{defn}\label{defB}
In addition to the incidence matrix $A$ of Definition \ref{defA}, we consider  the
$N \times N$ matrix $B=(b_{ij})$ whose entries are
$$
b_{ij}=1-a_{ij}=
\begin{cases}
0  &(v_i \in v_j^{\perp}) \\
1 &(v_i \not \in v_j^{\perp}).
\end{cases}
$$
\end{defn}
\noindent As it will turn out, the determinant of $B$ is easier to compute than that of $A$ and we use the relation between $A$ and $B$ to complete our computation. Furthermore, both of these matrices carry deeper algebraic meaning, as we shall see in  section \ref{sect3}.

We begin with a few structural observations regarding the matrices $A$ and $B$.

\begin{lem} \label{row sum and column sum}
\begin{enumerate}[\rm(a)]
\item
Matrices $A$ and $B$ are symmetric.
\item
$\sum _{i=1}^N a_{ij}=\sum _{j=1}^N a_{ij}= \qbinom{n-1}{1}$.
\item
$\sum _{i=1}^N b_{ij}=\sum _{j=1}^N b_{ij}= \qbinom{n}{1}-\qbinom{n-1}{1}=q^{n-1}$.
\end{enumerate}
\end{lem}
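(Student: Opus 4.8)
The plan is to verify each of the three claims by unwinding the definitions of $A$ and $B$ together with the properties of the orthogonal complement operation recorded in Section~\ref{sect2}. For part~(a), symmetry of $A$ amounts to the assertion that $v_i \in v_j^\perp$ if and only if $v_j \in v_i^\perp$. I would prove this directly from the definition of $v_k^\perp$: if $v_i=(a_1,\dots,a_n)$ and $v_j=(b_1,\dots,b_n)$ then both conditions are equivalent to $\sum_{\ell=1}^n a_\ell b_\ell = 0$, which is manifestly symmetric in $i$ and $j$ (this is just the symmetry of the standard bilinear form on $\FF^n$). Hence $a_{ij}=a_{ji}$, and then $b_{ij}=1-a_{ij}=1-a_{ji}=b_{ji}$, so $B$ is symmetric as well. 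Note symmetry also lets us conflate "row sum" and "column sum", which is useful in parts~(b) and~(c).

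For part~(b), fix a column index $j$. Then $\sum_{i=1}^N a_{ij}$ counts the number of elements $v_i \in \cal V_1$ with $v_i \in v_j^\perp$; equivalently, it is the number of one-dimensional subspaces contained in the fixed hyperplane $v_j^\perp \in \cal V_{n-1}$. Since $v_j^\perp$ is an $(n-1)$-dimensional $\FF$-vector space, this count equals $\card(\cal V_1(v_j^\perp)) = \qbinom{n-1}{1}$ by the level-set cardinality formula applied inside $v_j^\perp$. Using symmetry (part~(a)) the row sums agree with the column sums, giving the displayed identity. For part~(c), the row/column sums of $B$ are complementary: $\sum_i b_{ij} = \sum_i (1 - a_{ij}) = N - \qbinom{n-1}{1} = \qbinom{n}{1} - \qbinom{n-1}{1}$, and this difference equals $q^{n-1}$ by the identity $N = \qbinom{n}{1} = \qbinom{n-1}{1} + q^{n-1}$ already established in Section~\ref{sect2} (coming from $\PP^{n-1}_\FF = \PP^{n-2}_\FF \sqcup \AAA^{n-1}_\FF$). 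Again symmetry handles the row-versus-column interchange.

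None of the three parts presents a genuine obstacle; the only point requiring a moment's care is part~(b), where one must be sure that the subspaces of $\FF^n$ that are one-dimensional and lie in $v_j^\perp$ are precisely the one-dimensional subspaces of $v_j^\perp$ regarded as a vector space in its own right — which is immediate — so that the count $\qbinom{n-1}{1}$ is legitimate. Everything else is a direct substitution into the definitions.
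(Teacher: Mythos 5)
Your proposal is correct and follows essentially the same route as the paper: symmetry from $v_i\in v_j^\perp \Leftrightarrow v_j\in v_i^\perp$, the count $\qbinom{n-1}{1}$ of lines in the $(n-1)$-dimensional space $v_j^\perp$ for part (b), and complementation $b_{ij}=1-a_{ij}$ together with $N=\qbinom{n}{1}$ for part (c). The only cosmetic difference is that you justify the symmetry directly from the symmetry of the bilinear form rather than citing the inclusion-reversing property of the duality, which is an equivalent (and if anything more self-contained) argument.
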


\begin{proof}
Note that $v_i \in v_j ^{\perp}$  if and only if $v_j \in v_i ^{\perp}$,  which follows from the inclusion-reversing property of  dual spaces. This implies part (a).
The row sum of $A$ is equal to the number of  codimension 1 subspaces
in $\FF^{n-1}$ which contain $v_1$, and this is equal to the number of the 1-dimensional subspaces
in $v_1^{\perp} \cong \FF ^{n-1}$. Hence, part (b) follows.
Finally, since $N=\qbinom{n}{1}$,  part (c) follows as a consequence of the relations $b_{ij}=1-a_{ij}$.
\end{proof}

The following result shows the role played by the matrices $\Phi(\nu, \al, \be)$ in relation to $A$ and $B$.

\begin{lem} \label{description of A and B}
The following hold:
\begin{enumerate}[\rm(a)]
\item
$A^2=\Phi\left(N, \qbinom{n-1}{n-2}, \qbinom{n-2}{n-3}\right)$
\item
$B^2=\Phi(N, q^{n-1}, q^{n-2}(q-1))$
\item
$AB=\Phi(N,0,q^{n-2})$.
\end{enumerate}
\end{lem}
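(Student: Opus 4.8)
The plan is to compute each of the three matrix products entrywise, interpreting each entry as a count of subspaces, and then to recognize the resulting count via the enumerative identities already recorded. Throughout I will use the symmetry of $A$ and $B$ from Lemma \ref{row sum and column sum}(a), so that $(A^2)_{ij}=\sum_{k=1}^N a_{ik}a_{kj}$ counts the indices $k$ with $v_k\in v_i^\perp$ and $v_k\in v_j^\perp$, i.e.\ the number of elements of ${\cal V}_1$ contained in $v_i^\perp\cap v_j^\perp$; similarly for $B$ and for the mixed product. The key observation is that $v_i^\perp\cap v_j^\perp$ is a subspace of $\FF^n$ whose dimension depends only on whether $i=j$ (giving $v_i^\perp$, of dimension $n-1$) or $i\neq j$ (giving a subspace of dimension $n-2$, since $v_i^\perp\neq v_j^\perp$ are distinct hyperplanes). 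In either case the number of $1$-dimensional subspaces it contains is a $q$-binomial $\qbinom{d}{1}$ with $d=n-1$ or $d=n-2$, which is exactly the shape $\Phi(N,\alpha,\beta)$ demands for part (a).

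For part (a): when $i=j$ the count is the number of lines in $v_i^\perp\cong\FF^{n-1}$, namely $\qbinom{n-1}{1}=\qbinom{n-1}{n-2}$; when $i\neq j$ it is the number of lines in an $(n-2)$-dimensional space, namely $\qbinom{n-2}{1}=\qbinom{n-2}{n-3}$. This gives $A^2=\Phi\bigl(N,\qbinom{n-1}{n-2},\qbinom{n-2}{n-3}\bigr)$.

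For part (b): here $(B^2)_{ij}$ counts $k$ with $v_k\notin v_i^\perp$ and $v_k\notin v_j^\perp$, i.e.\ lines of $\FF^n$ lying in neither hyperplane. On the diagonal this is $N-\qbinom{n-1}{1}=q^{n-1}$ by Lemma \ref{row sum and column sum}(c). Off the diagonal, by inclusion--exclusion on the two hyperplanes $v_i^\perp,v_j^\perp$, the number of lines avoiding both equals $N - 2\qbinom{n-1}{1}+\qbinom{n-2}{1}$ (the last term being the lines inside $v_i^\perp\cap v_j^\perp$, an $(n-2)$-space); a short simplification of $q$-integers shows this equals $q^{n-2}(q-1)$. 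Hence $B^2=\Phi(N,q^{n-1},q^{n-2}(q-1))$. For part (c), $(AB)_{ij}=\sum_k a_{ik}b_{kj}$ counts lines $v_k\subseteq v_i^\perp$ with $v_k\not\subseteq v_j^\perp$; on the diagonal this is $0$ since $v_i^\perp\subseteq v_i^\perp$ forces every such $v_k$ into $v_i^\perp=v_j^\perp$; off the diagonal it is $\qbinom{n-1}{1}-\qbinom{n-2}{1}=q^{n-2}$, the lines of $v_i^\perp$ not lying in the codimension-one subspace $v_i^\perp\cap v_j^\perp$. Thus $AB=\Phi(N,0,q^{n-2})$. Alternatively, since $B=J-A$ where $J$ is the all-ones matrix, parts (b) and (c) can be derived formally from (a) together with $AJ=JA=\qbinom{n-1}{1}J$ (Lemma \ref{row sum and column sum}(b)) and $J^2=NJ$; I would mention this as a cross-check.

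The only real obstacle is bookkeeping: one must be careful that for $i\neq j$ the hyperplanes $v_i^\perp$ and $v_j^\perp$ are genuinely distinct (which holds because $W\mapsto W^\perp$ is a bijection and $v_i\neq v_j$), so that their intersection has dimension exactly $n-2$ rather than $n-1$; once that dichotomy is pinned down, each entry is a single $q$-binomial evaluation and the claimed $\Phi$-forms follow, with the elementary $q$-integer simplifications $N-2\qbinom{n-1}{1}+\qbinom{n-2}{1}=q^{n-2}(q-1)$ and $\qbinom{n-1}{1}-\qbinom{n-2}{1}=q^{n-2}$ completing (b) and (c).
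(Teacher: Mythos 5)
Your proposal is correct and follows essentially the same route as the paper: each entry of the product is interpreted as an incidence count and evaluated as a $q$-binomial (your counting of lines inside $v_i^\perp\cap v_j^\perp$ is just the dual of the paper's counting of hyperplanes containing both $v_i$ and $v_j$, and gives the same numbers). The only minor difference is in (c), where you count directly while the paper deduces it from $(A+B)B=\Phi(N,1,1)B$ together with the row sums of $B$ --- an argument you also note as a cross-check.
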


\begin{proof}
(a) The $(i,j)$-th entry of $A^2$ is $\sum _{k=1}^N a_{ik}a_{kj}$. Note that
$$
a_{ik}a_{kj}=
\begin{cases}
1  &(v_i, \ v_j \in v_k^{\perp}) \\
0 &(\text{otherwise}).
\end{cases}
$$
Hence, if $i=j$, the sum $\sum _{k=1}^N a_{ik}a_{kj}$ is equal to the number of codimension 1 subspaces in $\FF^n$ which contain $v_1$ and this number is $\qbinom{n-1}{n-2}$ because these subspaces are in bijection with codimension one subspaces of $v_1^\perp\simeq\FF^{n-1}$. If $i\neq j$, the sum $\sum _{k=1}^N a_{ik}a_{kj}$ is equal to the number of codimension 1 subspaces in $\FF^n$ which contain both $v_1$ and $v_2$. This number is $\qbinom{n-2}{n-3}$ because the codimension 1 subspaces in $\FF^n$ which contain both $v_1$ and $v_2$ are in bijection with codimension 1 subspaces in $ \{v_1, v_2\}^\perp\simeq\FF^{n-2}$. This  proves the assertion for $A^2$.

(b) The $(i,j)$-th entry of $B^2$ is $\sum _{k=1}^N b_{ik}b_{kj}$. For the diagonal entry of $B^2$ we have to count the number of the codimension 1 subspaces of $\FF^n$ which  do not contain $v_1$.  This number is $q^{n-1}$ since we have $\qbinom{n}{1}-\qbinom{n-1}{1}=q^{n-1}$.
To compute the off-diagonal entry of $B^2$ we use the inclusion-exclusion formula, since we have to count the number of the subspaces of $\FF^n$ of codimension 1 which  contain neither $v_1$ nor $v_2$.  The number of the subspaces in  $\FF^n$ of codimension 1 is $\qbinom{n}{1}$, and the number of the subspaces of codimension 1 which contain $v_1$ is $\qbinom{n-1}{1}$ and the same is true for $v_2$.
The number of the subspaces of codimension 1 which contain both $v_1$ and $v_2$ is $\qbinom{n-2}{1}$.  Hence,
$$\sum _{k=1}^N b_{ik}b_{kj}= \left[\begin{matrix}n\\1 \end{matrix}\right]_q - 2\left[\begin{matrix}n-1\\1 \end{matrix}\right]_q + \left[\begin{matrix}n-2\\1 \end{matrix}\right]_q=q^2(q-1).$$

(c) By the definition $A+B=\Phi(N, 1,1)$, which is the $N \times N$ matrix with 1 for all entries.  Hence $(A+B)B$ is the matrix which has the row sum of $B$ for all entries.
By Lemma~\ref{row sum and column sum}, this row sum is $q^{n-1}$.
Thus the diagonal entries of $AB$ are 0 and the off-diagonal entries are
equal to $q^{n-1}-q^{n-2}(q-1) = q^{n-2}$.
\end{proof}

 At this point, part (a) of Lemma \ref{description of A and B} together with the formula in Lemma \ref{lemphi} would allow us to complete the computation of $|\det(A)|$. It turns out, however, that it is easier to find $|\det(B)|$ first and utilize the relationship between the two determinants than to simplify the expression resulting from a direct approach. The following is the main result of this section.


\begin{thm} \label{main1}
For the matrices $A$ and $B=\Phi(N,1,1)-A$, we have
\begin{enumerate}[\rm(a)]
\item
$ \det (B^2)=q^{(n-2)N+n}$.
\item
$\left| \det B \right|=q^{((n-2)N+n)/2}$.
\item
$\left| \det A \right|=\left(q^{(n-2)(N-1)/2}\right)\qbinom{n-1}{1}$.
\end{enumerate}
\end{thm}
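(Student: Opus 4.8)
The plan is to obtain parts (a) and (b) from the description of $B^2$ in Lemma \ref{description of A and B}(b) together with the determinant formula for $\Phi$-matrices in Lemma \ref{lemphi}(a), and then to deduce part (c) by exploiting the multiplicative relation $AB = \Phi(N,0,q^{n-2})$ from Lemma \ref{description of A and B}(c). For part (a), I would apply Lemma \ref{lemphi}(a) with $\nu = N$, $\al = q^{n-1}$, and $\be = q^{n-2}(q-1)$. Here $\al - \be = q^{n-1} - q^{n-2}(q-1) = q^{n-2}$, and the second factor is $N\be + \al - \be = q^{n-2}(q-1)N + q^{n-2} = q^{n-2}\bigl((q-1)N + 1\bigr)$. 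Since $N = \qbinom{n}{1} = (q^n-1)/(q-1)$, we get $(q-1)N + 1 = q^n$, so the second factor equals $q^{n-2}\cdot q^n = q^{2n-2}$. Therefore
$$\det(B^2) = (q^{n-2})^{N-1}\cdot q^{2n-2} = q^{(n-2)(N-1) + 2n - 2} = q^{(n-2)N + n},$$
which is exactly (a). Part (b) is then immediate: $|\det B| = \sqrt{\det(B^2)} = q^{((n-2)N+n)/2}$ (the quantity under the root is a perfect square of a power of $q$ once one checks the parity of $(n-2)N + n$, which follows from $N \equiv n \pmod 2$ — indeed $N = 1 + q + \cdots + q^{n-1}$ has the same parity as the number of its terms when $q$ is odd, and for $q$ even $(n-2)N+n$ is even automatically).

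For part (c), the idea is that $AB = \Phi(N, 0, q^{n-2})$ has an easily computed determinant, so $|\det A|\cdot|\det B|$ is known and we can solve for $|\det A|$. Applying Lemma \ref{lemphi}(a) to $\Phi(N, 0, q^{n-2})$ with $\al = 0$, $\be = q^{n-2}$ gives $\al - \be = -q^{n-2}$ and $N\be + \al - \be = (N-1)q^{n-2}$, so
$$\det(AB) = (-q^{n-2})^{N-1}(N-1)q^{n-2} = \pm\, q^{(n-2)N}(N-1).$$
Hence $|\det A| = |\det(AB)| / |\det B| = q^{(n-2)N}(N-1) / q^{((n-2)N+n)/2} = (N-1)\, q^{((n-2)N - n)/2}$. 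It remains to reconcile this with the claimed formula $q^{(n-2)(N-1)/2}\qbinom{n-1}{1}$. Note $N - 1 = \qbinom{n}{1} - 1 = q + q^2 + \cdots + q^{n-1} = q\,\qbinom{n-1}{1}$, so $(N-1)\,q^{((n-2)N-n)/2} = \qbinom{n-1}{1}\,q^{((n-2)N - n + 2)/2} = \qbinom{n-1}{1}\,q^{((n-2)N - (n-2))/2} = \qbinom{n-1}{1}\,q^{(n-2)(N-1)/2}$, as desired.

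The main obstacle I anticipate is not the algebra above, which is routine, but rather justifying that $\det A$ is genuinely nonzero and that extracting a square root in (b) and dividing in (c) is legitimate — i.e., one must know a priori that $B$ (and $A$) is invertible, or argue it from $\det(B^2) = q^{(n-2)N+n} \neq 0$, which is fine since we work in characteristic zero. One should also be careful that the sign ambiguities are harmless: since only $|\det A|$ and $|\det B|$ are claimed (and are the meaningful invariants, as remarked after Definition \ref{defA}), the $\pm$ signs arising from $(-q^{n-2})^{N-1}$ cause no trouble. A secondary point of care is the parity bookkeeping needed to see that the exponents $((n-2)N+n)/2$ and $(n-2)(N-1)/2$ are integers; this follows from $N - 1 = q\,\qbinom{n-1}{1}$ together with $N \equiv 1 + (n-1) = n \pmod 2$ when $q$ is odd, and is automatic when $q$ is even.
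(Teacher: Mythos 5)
Your proof is correct. For parts (a) and (b) you follow exactly the paper's route: apply Lemma \ref{lemphi}(a) to $B^2=\Phi(N,q^{n-1},q^{n-2}(q-1))$ and simplify using $N(q-1)+1=q^n$; the parity check you supply for the exponent $((n-2)N+n)/2$ is a detail the paper omits. For part (c) you take a genuinely different, though closely related, path: you compute $|\det(AB)|=(N-1)q^{(n-2)N}$ from Lemma \ref{description of A and B}(c), divide by $|\det B|$, and reconcile with the stated formula via $N-1=q\qbinom{n-1}{1}$. The paper instead invokes Lemma \ref{lemphi}(b): since $A^2$ and $B^2$ are both $\Phi$-matrices with the same gap $\al-\be=q^{n-2}$, one reads off $\det(A^2):\det(B^2)=\qbinom{n-1}{1}^2:q^{2(n-1)}$, hence $|\det A|:|\det B|=\qbinom{n-1}{1}:q^{n-1}$, with no need for the $AB$ product or the final rewriting of $N-1$. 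The two arguments are equally rigorous and of comparable length --- yours leans on part (c) of Lemma \ref{description of A and B} where the paper leans on part (a) --- and in fact the paper records your exact computation, including the identity $N-1=q\qbinom{n-1}{1}$ used to match the two expressions, in the remark immediately following Theorem \ref{main1} as an alternative derivation. Your care with the sign $(-q^{n-2})^{N-1}$ and with the legitimacy of dividing by $\det B$ (nonzero since $\det(B^2)$ is a positive power of $q$ in characteristic zero) addresses exactly the points where such an argument could otherwise be sloppy.
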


\begin{proof}
Lemmas~\ref{lemphi}(a) and~\ref{description of A and B}(b) imply that $\det (B^2)=q^{(n-2)N}(N(q-1)+1)$.  Now part (a) follows from the formula $N=(q^n-1)/(q-1)$ and part (b) follows immediately from (a).

Recall from Lemma \ref{description of A and B} the identities $A^2=\Phi\left(N, \qbinom{n-1}{n-2}, \qbinom{n-2}{n-3}\right)$ and $B^2=\Phi(N, q^{n-1}, q^{n-2}(q-1))$. Since $N=\qbinom{n}{1}$, $\qbinom{n-1}{n-2}- \qbinom{n-2}{n-3}=\qbinom{n-1}{1}- \qbinom{n-2}{1}=q^{n-2}$ and $q^{n-1}-q^{n-2}(q-1)=q^{n-2}$, it follows from Lemma~\ref{lemphi}(b) that 
\begin{eqnarray*}
\det(A^2):\det(B^2) &=& \left(N\cdot\frac{q^{n-2}-1}{q-1}+q^{n-2}\right):\left(Nq^{n-2}(q-1)+q^{n-2}\right)\\
&=& \left(\frac{(q^n-1)(q^{n-2}-1)}{(q-1)^2}+q^{n-2}\right):\left((q^n-1)q^{n-2}+q^{n-2}\right)\\
 &=& \frac{(q^n-1)^2}{(q-1)^2}: q^{2n-2}
 = \left(\qbinom{n-1}{1}\right)^2 : (q^{n-1})^2.
 \end{eqnarray*}
Taking the square root gives $\left| \det A \right| : \left| \det B \right|= \qbinom{n-1}{1} : q^{n-1}$. Together with part (b), this implies part (c) of the theorem.
\end{proof}



\begin{disc}
We can compute the determinant for $A$ also using the description of $AB$. From Lemmas~\ref{lemphi} and~\ref{description of A and B}, noting that
$N-1=\qbinom{n}{1}-1=q \qbinom{n-1}{1}=\left|\det \Phi(N, 0,1)\right|$, one gets
$\left|  \det (AB) \right|=(N-1)q^{N(n-2)}$, whence
$ \left|\det A \right|=(N-1)q^{(N(n-2)-n)/2}$.
This description for $\left|\det A \right|$ is slightly different from
Theorem~\ref{main1}. Of course, they are in fact the same since we have
\[N-1 =\left[\begin{matrix}n\\1 \end{matrix}\right]_q-1=q\left[\begin{matrix}n-1\\1 \end{matrix}\right]_q.\]
\end{disc}

\begin{disc} The result in Theorem \ref{main1} $($c$)$  recovers  the non-vanishing of one of the determinants involved in the definition of the strong Lefschetz property for ranked posets given in the Introduction. We recall that the vector space lattice has been proven to have the strong Lefschetz property in \cite{Ka}.
\end{disc}

\begin{ex}
The determinant computations below were obtained directly using Mathematica \cite{refMat}, independent of Theorem~\ref{main1} for $q=2, 3, 5$.

\begin{center}$
\begin{array}{ccccccccccccccccccccccccccc}
q=2      &&&&&&&              &&&&&&&&&&&&&&&                 &                &                 &                &
\end{array}
$

\noindent$
\begin{array}{|c||c|c|c|c|c|c|}
\hline
n       & 3             &    4              &   5              &               6  &  7              & 8     \\  \hline
N & 7 & 15 & 31& 63 & 127 & 255 \\ \hline
\det A  & 2^{3}\cdot 3   &   2^{14}\cdot 7   &   2^{45}\cdot 15  & 2^{124} \cdot 31  &  2^{315}\cdot63  & 2^{762}\cdot127  \\  \hline
\det B  & 2^{3}\cdot 2^{2}&  2^{14}\cdot 2^3  &   2^{45}\cdot 2^4 & 2^{124} \cdot 2^5 &  2^{315}\cdot2^6 & 2^{762}\cdot2^7   \\ \hline
\end{array}
$
\end{center}

\noindent $
\begin{array}{cccccccccccccccccccc}
q=3      &&&&&&&              &&&&&&&&&&&&
\end{array}
$
\quad
$
\begin{array}{ccccccccccc}
q=5     &&&&&&&              &&&
\end{array}
$

\noindent$
\begin{array}{|c||c|c|c|c|} \hline
n       & 3             &    4               &   5                       & 6                     \\  \hline
N & 13 & 40 & 121 & 364  \\ \hline
\det A  & 3^{6}\cdot 2^2   &  3^{39}\cdot 13  &   3^{180}\cdot 2^3\cdot 5  & 3^{726} \cdot 11^2     \\  \hline
\det B  & 3^{6}\cdot 3^{2} &  2^{39}\cdot 3^3  &   3^{180}\cdot 3^4        & 3^{726} \cdot 3^5       \\ \hline
\end{array}
$
\quad
$
\begin{array}{|c||c|c|} \hline
n       & 3                      &    4                \\  \hline
N & 31 & 156 \\ \hline
\det A  & 5^{15}\cdot 2 \cdot 3   &   5^{155}\cdot 31     \\  \hline
\det B  & 5^{15}\cdot 5^{2}        &  5^{155}\cdot 5^3    \\ \hline
\end{array}
$
\end{ex}

\smallskip

\section{The Hessian of the Macaulay dual generator for the Gorenstein algebra associated to the vector space lattice  }\label{sect4}

In this section, we relate the combinatorial data of section \ref{sect2} to algebraic invariants arising from a graded ring associated to the vector space lattice.

Recall that $N=\qbinom{n}{1}$. Consider the polynomial rings
$R=K[X_1, \ldots, X_N]$ and $Q=K[x _1, \ldots, x _N]$, where $K$
is a field of characteristic zero. Setting $x_i=\pa/\pa X_i$ allows one to view $R$ as a $Q$-module via the partial differentiation action of $Q$ on $R$ given by $x _i\circ f=\pa f/\pa X_i$, for  $f\in R$.

A bijection can be established between the set of variables in $R$ and the set $\PP ^{n-1}_{\FF}$ of vectors of length $n$ with entries in the field $\FF$ in which the first non-zero entry is $1$. We fix this bijection once and for all, so that  the variable $X_i$ corresponds to the vector $v_i \in  \PP ^{n-1}_{\FF}$. 

 We now outline the  construction given in \cite{MN} of a graded Artinian Gorenstein algebra associated to the vector space lattice. 
This uses the theory of Macaulay inverse systems, which provides a  correspondence between homogeneous polynomials in the ring $R$ and 
graded Artinian Gorenstein quotient algebras of $Q$.  For more details on Macaulay inverse systems the reader may consult \cite{Ge} and~\cite{IK}.

\begin{defn}\label{Macaulay correspondence}
For a homogeneous polynomial $F\in R$, the annihilator of $F$ in $Q$ is the ideal $I\subset Q$  defined by
\[\Ann _Q(F):=\{f \in Q\ \mid\  f \circ F=0\}.\]
If $I$ is an ideal of $Q$  the following set is the annihilator of $I$ in $R$:
\[\Ann _R(I):=\{F \in R\ \mid\  f \circ F=0, \  \forall f\in I \}.\]
\end{defn}

Let    $I    \subset   Q$   be a homogeneous ideal of  finite colength. It is well known that if  $Q/I$  is Gorenstein, then there exists a homogeneous form  $F \in R$  such that
$I = \Ann _Q (F)$.  On the other hand, if  $F \in  R$ is homogeneous, then
$I = \Ann_Q(F)$  is a homogeneous ideal and $Q/\Ann_Q(F)$ is an Artinian Gorenstein algebra.

The idea of constructing a Gorenstein algebra associated to the vector space lattice is that one can encode its combinatorial structure in a homogeneous polynomial of $R$ and then consider the graded Gorenstein quotient of $Q$ corresponding to it.

\begin{defn}\label{Mac dual}
We define the \emph{Macaulay dual generator} for the vector space lattice to be the following degree $n$ homogeneous polynomial in $R$
$$F_{{\mathcal V}(n,q)}=\sum_{ X_{i_1}X_{i_2}\ldots X_{i_n}\in \cal B} X_{i_1}X_{i_2}\ldots X_{i_n}.$$ In the sum, the sets of indices of the variables appearing in each monomial represent the subsets of $\cal V_1$ that form bases for $\FF^n$, namely:
\[ \cal B = \{ X_{i_1}X_{i_2}\ldots X_{i_n}\ \mid\ 1 \leq i_1 < i_2 < \ldots i_n \leq N\ \text{and}\ {\rm det}[v_{i_1}v_{i_2} \ldots v_{i_n}] \neq 0 \mbox{ in } \FF \}. \]
\end{defn}

\noindent The cardinality of the set $\cal B$ above is according to Proposition \ref{the number of paths}
$$
\card(\cal B)=s_{n,q}=\left(\frac{q^{n(n-1)/2}}{n!}\right)\left(\prod _{k=1} ^n \left[\begin{matrix} k\\1\end{matrix}\right]_q\right).
$$

 \begin{defn}
 Setting $I=\Ann_Q(F_{{\mathcal V}(n,q)})$ yields a graded Artinian  Gorenstein quotient ring $\cal A_{{\mathcal V}(n,q)}=Q/I$, which we call the \emph{Gorenstein  algebra associated to the vector space lattice}.   For simplicity, we write $\cal A$ for $\cal A_{{\mathcal V}(n,q)}$ henceforth, unless otherwise specified.
 \end{defn}

 This graded ring decomposes into homogeneous components  as follows$$\cal A=Q/I=\bigoplus_{i=0}^n (Q/I)_i=\bigoplus_{i=0}^n \cal A_i.$$

 One notices the similarity between the homogeneous decomposition of $\cal A$ and  the rank decomposition of $\cal V(n,q)$.  It is shown in~\cite[Lemma 1.48 and step 4 in the proof of Theorem 1.83]{HMMNWW} and~\cite[Lemma 4.1 and Theorem 4.2]{MN} that the non-zero monomials in $\cal A$ are in bijective correspondence with the elements of ${\cal V(n,q)}$ in such a way that the level set ${\cal V}_i$ corresponds to the monomials in the  graded component $\cal A_{i}$.
In particular, we have the following correspondences
\begin{eqnarray*}
\cal A_0 \ni  1 & \leftrightarrow  &  \FF ^0 \in {\cal V}_0 \\
\cal A_n \ni  g  & \leftrightarrow  &  \FF ^n \in {\cal V}_n,
\end{eqnarray*}
 where $g$ is a monomial of degree $n$ called a socle generator for $\cal A$. The socle of $\cal A$ is a 1-dimensional vector space, thus in  $\cal A$ $g$ is unique up to scalar. However any product of variables of $Q$ whose indices correspond to a basis of $V$ and can be chosen to be a representative for $g$.

Next we recall the algebraic counterpart of the Lefschetz properties defined for ranked posets in the Introduction, with the end goal of explicitly relating the incidence matrices of section \ref{sect2} with certain matrices arising from the Macaulay dual generator in Definition \ref{Mac dual}.

  Consider for some scalar values $a_1, \ldots, a_N\in K$ the linear form
 $$L=a_1x_1 + \ldots + a_Nx_N\in Q$$  and let $0\leq j\leq \lfloor n/2 \rfloor$. We set $\times L^{n-2j}: \cal A\to \cal A$ to be the $Q$-module homomorphism given by $x\mapsto L^{n-2j}x$. Restricting to the degree $j$ and $n-j$ homogeneous components of $\cal A$, we obtain the $K$-linear maps
\[\times L^{n-2j}: \cal A_j \to \cal A_{n-j}.\]
The motivation for considering such a map originally arises from the study of cohomology rings of  compact K\"{a}hler manifolds, where one can regard such a map as taking a class in cohomology and intersecting it with hyperplanes (represented by $L$) $n-2j$ times. 

Fixing the sets of monomials corresponding to elements of $\cal V_j$ and $\cal V_{n-j}$, respectively, as bases for $\cal A_j$ and $\cal A_{n-j}$ one can express the linear transformations $\times L^{n-2}$  as  matrices $M_j$. Note that $\dim_K\cal A_j=\dim_K \cal A_{n-j}$ since the bases for these vector spaces correspond to symmetric level sets $\cal V_j$ and $\cal V_{n-j}$ of $\cal V(n,q)$ which have the same size. Thus, it makes sense to consider $\det M_j$.

\begin{defn}\label{def Lefschetz}
Let $\cal A$ be any graded Gorenstein Artinian algebra. If there exist scalars $a_1,\ldots, a_N\in K$ such that the matrices $M_j$ representing the $K$-linear maps $\times L^{n-2j}: \cal A_j \to \cal A_{n-j}$ for $L=a_1x_1 + \ldots + a_Nx_N$ have $\det M_j\neq 0$ for all  $0\leq j\leq \lfloor n/2 \rfloor$, the algebra $\cal A$ is said to have the \emph{strong Lefschetz property}.
\end{defn}


We turn to our case of interest $\cal A=\cal A_{{\mathcal V}(n,q)}$ and  focus on a particular choice of linear form, $\ell = x_1+x_2+\ldots+ x_N$. We shall be particularly concerned with computing the determinant of the matrix that represents the map $\times \ell^{n-2}$.  Setting  $x_i^\perp=\prod_{v_j\in v_i^\perp} x_j$, consider the bases $\cal B_1=\{x_1,\ldots,x_N\}$ for $\cal A_1$ and $\cal B_{n-1}=\{x_1^\perp,\ldots, x_N^\perp\}$ for $\cal A_{n-1}$, which we shall call canonical bases, and let $M$ be the matrix that represents the linear transformation $\ell^{n-2}$ with respect to these fixed bases. 

\begin{ex}\label{ex2}
Let $q=2$ and $n=3$, which yield  $N=7$. We use the notation of Example \ref{ex1}.
A computation with {\em Macaulay2} \cite{M2} yields that the matrix representing $\times \ell: \cal A_1\to \cal A_2$ with respect to the bases
$$\begin{array}{rcl}
\cal B_1 &= & \{x_1, x_2, x_3,x_4,x_5,x_6,x_7\}\\
\cal B_2 & = & \{x_2x_4,  x_1x_4, x_3x_4, x_1x_2, x_2x_5, x_1x_6, x_3x_5 \}\\
\end{array}$$
is the matrix $M$ below, related to  the incidence matrix $A$ computed in example \ref{ex1} as follows:
$$
M=\begin{pmatrix} 0&2&0&2&0&2&0 \\ 2&0&0&2&2&0&0 \\ 0&0&2&2&0&0&2 \\ 2&2&2&0&0&0&0 \\ 0&2&0&0&2&0&2\\ 2&0&0&0&0&2&2\\ 0&0&2&0&2&2&0\end{pmatrix}=2A.
$$

\end{ex}

The following Theorem describes the precise relation between the incidence matrix $A$ of Definition \ref{defA} and the matrix describing multiplication by $\ell^{n-2}$.

\begin{thm}\label{multLn-2}
The matrix $M$ representing $\times \ell^{n-2}$ with respect to the standard bases for $\cal A_1$ and $\cal A_{n-1}$ is $M=t_{n-1,1,q}A$.
Hence,
$\left|\det M\right|=t_{n-1,1,q}^N\left | \det A\right |$.
\end{thm}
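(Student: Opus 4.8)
The plan is to compute the matrix entry $M_{ij}$ directly by expanding $\ell^{n-2}$ and seeing which monomials of $\cal A_{n-1}$ appear when $\ell^{n-2}$ multiplies the basis element $x_i \in \cal B_1$. Concretely, the product $\ell^{n-2} x_i$ is a sum over all ways to pick $n-2$ factors (with repetition, weighted by multinomial coefficients) from $x_1,\dots,x_N$ and multiply by $x_i$; a resulting monomial of degree $n-1$ is nonzero in $\cal A$ exactly when the corresponding $n-1$ vectors of $\cal V_1$ are linearly independent, in which case it equals the canonical basis monomial $x_k^\perp$ for the unique index $k$ with $v_k^\perp$ the codimension-one subspace spanned by those vectors (equivalently $v_i \in v_k^\perp$). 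So the $(k,i)$ entry $M_{ki}$ is obtained by counting, with multinomial weights, the monomials $x_{j_1}\cdots x_{j_{n-2}}x_i$ that span a fixed hyperplane $v_k^\perp$ containing $v_i$.

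The key combinatorial step is to recognize that this weighted count is precisely the number of \emph{ordered} $(n-2)$-tuples of elements of $\cal V_1$ which, together with the fixed $v_i$, span the hyperplane $v_k^\perp$ — and this equals the number of ordered $(n-1)$-tuples from the $(n-1)$-dimensional space $v_k^\perp$ that form a basis of it and whose first entry is $v_i$. By Proposition~\ref{the number of paths}(d) applied inside $v_k^\perp \cong \FF^{n-1}$ with the fixed independent subset being $\{v_i\}$ (so $j=1$), this count is exactly $t_{n-1,1,q}$, independent of the particular $k$ and $i$ (as long as $v_i \in v_k^\perp$). The point that makes the multinomial weighting collapse to a clean count is the standard fact that $\sum \binom{n-2}{e_1,\dots,e_r} = $ (number of ordered tuples realizing a given unordered multiset pattern summed appropriately) reassembles into the plain count of ordered $(n-2)$-tuples: expanding $\ell^{n-2} = \sum x_{j_1}\cdots x_{j_{n-2}}$ over ordered tuples already has the multinomial coefficients built in, so each ordered tuple contributes exactly $1$. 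Hence $M_{ki} = t_{n-1,1,q}$ when $v_i \in v_k^\perp$ and $M_{ki}=0$ otherwise, which is $M = t_{n-1,1,q} A$ by Definition~\ref{defA} and the symmetry of $A$ from Lemma~\ref{row sum and column sum}(a).

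I would then finish by taking determinants: since $M$ is an $N\times N$ scalar multiple of $A$, we get $\det M = t_{n-1,1,q}^N \det A$, and passing to absolute values gives $\left|\det M\right| = t_{n-1,1,q}^N \left|\det A\right|$, which one can further combine with Theorem~\ref{main1}(c) to get a fully closed formula. The main obstacle I anticipate is the bookkeeping in the second step: one must argue carefully that no cancellation occurs in $\cal A$ among the degree $n-1$ monomials produced (monomials spanning \emph{different} hyperplanes are genuinely different basis elements, and monomials whose vectors are linearly \emph{dependent} vanish), and that every ordered tuple spanning the right hyperplane produces the \emph{same} canonical monomial $x_k^\perp$ rather than some scalar multiple — this is where the specific structure of the Macaulay dual generator $F_{{\mathcal V}(n,q)}$ and the resulting relations in $\cal A$ must be invoked, presumably via the description of $\cal A_{n-1}$ already recorded in the excerpt (the correspondence between nonzero monomials of $\cal A_{n-1}$ and $\cal V_{n-1}$). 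Everything else is a direct identification with the count in Proposition~\ref{the number of paths}(d).
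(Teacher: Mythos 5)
Your proposal is correct and follows essentially the same route as the paper: compute each entry of $M$ by expanding $\ell^{n-2}x_i$, note that only squarefree monomials with linearly independent supports survive and collapse to the canonical hyperplane basis element, and identify the resulting count of ordered $(n-2)$-tuples completing $v_i$ to a basis of $v_k^\perp$ with $t_{n-1,1,q}$ via Proposition~\ref{the number of paths}(d). Your extra care about the multinomial weights and the absence of cancellation only makes explicit what the paper's terser proof leaves implicit.
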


\begin{proof}
To find the entry of $M$ in the position indexed by the variable $x_i\in \cal A_1$ corresponding to $v_i$  and the basis element $x_j^\perp=\prod_{v_k\in v_j^\perp}x_k\in \cal A_{n-1}$ corresponding to the element $v_j^\perp\in \nu_{n-1}$,  we need to count the number of monomials $x_{k_1}x_{k_2}\ldots x_{k_{n-1}}$ in the expansion of
$\ell^{n-1}$ in the polynomial ring  which satisfy the following conditions:

\begin{itemize}
\item[(a)]
One of $x_{k_1}, x_{k_2}, \ldots, x_{k_{n-1}}$  is $x_i$.
\item[(b)]
${\rm Span}\langle v_{k_1}, v_{k_2}, v_{k_3}  \ldots, v_{k_{n-1}}   \rangle=v_j^\perp$ .
\end{itemize}

If $v_i\not \in v_j^\perp$ then clearly this number is zero. If  $v_i \in v_j^\perp$, then we need to count the number of  ordered $(n-1)$-tuples which form bases for  $v_j^\perp$ and contain $v_1$. By Proposition \ref{the number of paths} this number is
$$t_{n-1,1,q}=\left(q^{(n-1)(n-2)/2}\right)\left(\prod _{k=1}^{n-2}\left[\begin{matrix}k\\1\end{matrix}\right]_q\right).$$
Hence, it follows from Definition \ref{defA} that the matrix for $\times \ell^{n-2}$ is $t_{n-1,1,q}A$.
\end{proof}

It is shown in~\cite{refMW} that there is a close connection between the matrices representing $\times L^{n-2j}$ for $L=a_1x_1 + \ldots + a_Nx_N$  and the determinants of  higher analogues of the classical Hessian matrix of the Macaulay dual generator $F_{{\mathcal V}_(q,n)}$, evaluated at $X_1=a_1,\ldots, X_N=a_N$. For our purposes it suffices to consider the classical Hessian, as this corresponds to $\times L^{n-2}$ which we have been able to relate to the incidence matrix in Theorem \ref{multLn-2}.

\begin{defn}\label{defhes}
The \emph{Hessian matrix} of a polynomial $F\in R=K[X_1,\ldots,X_N]$ is the matrix of partial derivatives $$H(F)=\left(\frac{\pa^2 F}{\pa X_i\pa X_j}\right)_{1\leq i\leq N, 1\leq j\leq N}.$$
\end{defn}

We begin by describing the Hessian matrix in our running example.
\begin{ex}\label{ex3}
Let $q=2$ and $n=3$. We use the notation of Examples \ref{ex1} and \ref{ex2}.
The Macaulay dual generator, as introduced in Definition \ref{Mac dual}, is
\begin{align*}F_{{\mathcal V}(3,2)}&= X_1X_2X_4+X_1X_3X_4+X_2X_3X_4+X_1X_2X_5+X_1X_3X_5+X_2X_3X_5\\
&+X_2X_4X_5+X_3X_4X_5+X_1X_2X_6+X_1X_3X_6+X_2X_3X_6+X_1X_4X_6\\
& +X_3X_4X_6+X_1X_5X_6+X_2X_5X_6+X_4X_5X_6+X_1X_2X_7+X_1X_3X_7\\
& +X_2X_3X_7+X_1X_4X_7+X_2X_4X_7+X_1X_5X_7+X_3X_5X_7+X_4X_5X_7\\
&+ X_2X_6X_7+X_3X_6X_7+X_4X_6X_7+X_5X_6X_7.
\end{align*}
Note that this polynomial has $s_{3,2}=28$ terms, in accordance to the formula in Proposition \ref{the number of paths}. A computation with {\em Macaulay2} \cite{M2} yields that, after evaluating at $X_1=\ldots=X_7=1$, the Hessian matrix is $$H(F_{{\mathcal V}(3,2)}) \vert_{X_1=X_2 = \ldots = X_7=1}
=
 \bgroup\begin{pmatrix}0&
       4&
       4&
       4&
       4&
       4&
       4\\
       4&
       0&
       4&
       4&
       4&
       4&
       4\\
       4&
       4&
       0&
       4&
       4&
       4&
       4\\
       4&
       4&
       4&
       0&
       4&
       4&
       4\\
       4&
       4&
       4&
       4&
       0&
       4&
       4\\
       4&
       4&
       4&
       4&
       4&
       0&
       4\\
       4&
       4&
       4&
       4&
       4&
       4&
       0\\
       \end{pmatrix}=\Phi(7,0,4)\egroup.$$
\end{ex}

In the following we aim to understand this especially nice form of the Hessian matrix by describing the relation between the Hessian of the Macaulay dual generator of $\cal A$ and the matrices introduced in section \ref{sect2}.

\begin{lem}\label{commutdiag}
Let $F\in R=K[X_1,\ldots,X_N]$ be a homogeneous polynomial of degree $n$, let $a_1,\ldots, a_N\in K$  and consider the linear form $L=a_1\frac{\partial}{\partial X_1}+\ldots  +a_N\frac{\partial}{\partial X_N}\in Q$.
Then there is a commutative diagram
$$\xymatrixcolsep{4.5pc}
\xymatrix{\cal A_1 \otimes_K \cal A _1\ar[r]^-{ \mathbf{1}_{\cal A_1}\otimes_K(\times L^{n-2})} \ar@/_1.5pc/[rrr]_-{(n-2)!H(F)\vert _{X_1=a_1,\ldots, X_N=a_N}}&\cal A_1 \otimes_K \cal A _{n-1}\ar[r]^-{\mu}&\cal A_n\ar[r]^{\circ F}&K}
$$
where
\begin{enumerate}[\rm(a)]
\item $\mu$ denotes the internal multiplication on $\cal A$,
\item the map $\circ F$ maps $f\in \cal A_n\mapsto f\circ F\in K$ and
\item
$H(F)\vert _{X_1=a_1,\ldots, X_N=a_N}$ denotes the $K$-bilinear form $\cal A_1\otimes_K \cal A_1\to K$  represented with respect to the basis  $\{\frac{\partial}{\partial X_1}, \frac{\partial}{\partial X_2} ,...,  \frac{\partial}{\partial X_N}\}$ of $\cal A_1$ by the matrix in Definition \ref{defhes} evaluated at $X_1=a_1,\ldots, X_N=a_N$.
\end{enumerate}
\end{lem}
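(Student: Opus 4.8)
The plan is to unwind both composites in the diagram as explicit bilinear forms on $\cal A_1 \otimes_K \cal A_1$ and check they agree. Working with the distinguished basis $\{\partial/\partial X_1,\ldots,\partial/\partial X_N\}$ of $\cal A_1$, it suffices to show that for each pair of indices $i,j$ the scalar obtained by running $\partial/\partial X_i \otimes \partial/\partial X_j$ through the top route $(\times L^{n-2})\otimes \mathbf{1}$, then internal multiplication $\mu$, then $\circ F$, equals $(n-2)!$ times the $(i,j)$ entry of $H(F)$ evaluated at $X_1=a_1,\ldots,X_N=a_N$, i.e. $(n-2)!\bigl(\partial^2 F/\partial X_i \partial X_j\bigr)\vert_{X=a}$.

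First I would make the right-hand route concrete. The map $\mu$ followed by $\circ F$ sends a decomposable tensor $f\otimes g\in \cal A_1\otimes_K \cal A_{n-1}$ to $(fg)\circ F\in K$; since $\cal A = Q/\Ann_Q(F)$ is Gorenstein with socle in degree $n$, this pairing $\cal A_1\otimes_K \cal A_{n-1}\to K$ is exactly the perfect Macaulay pairing. Along the top, $(\times L^{n-2})\otimes \mathbf{1}_{\cal A_1}$ sends $\partial/\partial X_i \otimes \partial/\partial X_j$ to $(L^{n-2}\,\partial/\partial X_i)\otimes (\partial/\partial X_j)$. Composing, the whole top route evaluates $\partial/\partial X_i\otimes\partial/\partial X_j$ to $\bigl(L^{n-2}\cdot \partial/\partial X_i\cdot \partial/\partial X_j\bigr)\circ F$. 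So the claim reduces to the polynomial identity
$$\left(L^{n-2}\frac{\partial}{\partial X_i}\frac{\partial}{\partial X_j}\right)\circ F = (n-2)!\left.\frac{\partial^2 F}{\partial X_i\,\partial X_j}\right|_{X_1=a_1,\ldots,X_N=a_N},$$
where on the left $L = \sum_k a_k\,\partial/\partial X_k$ acts by contraction (the $\circ$ action from Definition \ref{Macaulay correspondence}).

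The heart of the argument is this last identity, and it is a direct computation with the differential-operator action. Apply $\partial/\partial X_i\,\partial/\partial X_j$ to $F$ first, obtaining the degree $n-2$ homogeneous polynomial $\partial^2F/\partial X_i\partial X_j$; then I must check that contracting a degree $n-2$ form $G$ against $L^{n-2} = \bigl(\sum a_k\partial/\partial X_k\bigr)^{n-2}$ yields $(n-2)!\,G(a_1,\ldots,a_N)$. Expanding $L^{n-2}$ by the multinomial theorem and using that $(\partial/\partial X_{k_1})\cdots(\partial/\partial X_{k_{n-2}})$ applied to a monomial $X_1^{c_1}\cdots X_N^{c_N}$ of total degree $n-2$ is nonzero exactly when the multiset $\{k_1,\ldots,k_{n-2}\}$ matches the exponent vector $(c_1,\ldots,c_N)$, in which case it equals $\prod_\ell c_\ell!$, one sees that the multinomial coefficient $\binom{n-2}{c_1,\ldots,c_N}$ times $\prod a_\ell^{c_\ell}\prod c_\ell!$ collapses to $(n-2)!\prod a_\ell^{c_\ell}$; summing over monomials of $G$ gives $(n-2)!\,G(a)$. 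This is the only place any real calculation happens, and it is the step I would expect a referee to scrutinize; everything else is bookkeeping about which basis represents which map. I would also remark that the curved arrow is well-defined on $\cal A_1\otimes_K\cal A_1$ (rather than just on $R_1\otimes_K R_1$) because the bilinear form $H(F)\vert_{X=a}$ automatically kills anything in $\Ann_Q(F)_1$, which is forced by commutativity of the diagram together with the fact that the other three arrows are genuinely defined on $\cal A$.
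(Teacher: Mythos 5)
Your proof is correct and follows essentially the same route as the paper: both reduce commutativity of the diagram to the identity $\bigl(L^{n-2}\tfrac{\partial}{\partial X_i}\tfrac{\partial}{\partial X_j}\bigr)\circ F=(n-2)!\,\tfrac{\partial^2F}{\partial X_i\partial X_j}\big\vert_{X=a}$ by identifying the two sides with the two routes through the diagram. The only difference is that the paper simply cites this identity (from \cite{Wa}, \cite{refMW}, \cite{HMMNWW}), whereas you verify it directly by the multinomial expansion of $L^{n-2}$ acting on a degree $n-2$ form; that computation is correct.
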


\begin{proof}
From  the proof of \cite[Theorem 4]{Wa}, \cite[Theorem 3.1]{refMW} or \cite[Theorem 3.76]{HMMNWW} we have the following identity:
$$L^{n-2}\frac{\pa}{\pa X_i}\frac{\pa}{\pa X_j}F(X)=(n-2)!\frac{\pa}{\pa X_i}\frac{\pa}{\pa X_j}F(X)\vert_{X_1=a_1,\ldots,X_N=a_N}.$$
The left side of the expression above can be viewed as the composition of the three maps in the top line of the diagram, applied to the element $\frac{\pa}{\pa X_i}\otimes \frac{\pa}{\pa X_j}\in \cal A_1 \otimes_K \cal A_1$. The right side of the displayed equality is the bottom map in the diagram evaluated at the same element. The commutative diagram represents this equality in visual form.
\end{proof}

To exploit the relations illustrated in the above diagram, we prove the following.
\begin{prop}\label{mu}
The matrix describing the natural (bilinear) multiplication map $\cal A_1\otimes_K \cal A_{n-1}\stackrel{\mu}{\longrightarrow}\cal A_n$ with respect to the canonical bases of $\cal A_1$,$\cal A_{n-1}$ and  $\cal A_n$, respectively, is the matrix $B$ introduced in Definition \ref{defB}.
\end{prop}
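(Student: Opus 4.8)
The plan is to compute directly the matrix of the multiplication map $\mu\colon \cal A_1\otimes_K \cal A_{n-1}\to \cal A_n$ in the canonical bases $\cal B_1=\{x_1,\dots,x_N\}$, $\cal B_{n-1}=\{x_1^\perp,\dots,x_N^\perp\}$, and $\cal A_n=Kg$, and to check that the resulting $N\times N$ matrix of coefficients is exactly $B=(b_{ij})$ with $b_{ij}=1$ if $v_i\notin v_j^\perp$ and $0$ otherwise. So the $(i,j)$ entry we must identify is the coefficient of $g$ in the product $x_i\cdot x_j^\perp\in\cal A_n$, where $x_j^\perp=\prod_{v_k\in v_j^\perp}x_k$ is a monomial of degree $n-1$ (recall $\dim v_j^\perp=n-1$ and the monomial has one factor $x_k$ for each of the $N$-minus-something... rather, for each $v_k\in v_j^\perp\cap\cal V_1$; but what matters is that it is one of the distinguished nonzero monomials of $\cal A_{n-1}$).

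\textbf{Step 1: unwind the correspondence between monomials and subspaces.} By the cited results of \cite{HMMNWW, MN}, the nonzero monomials of $\cal A_i$ are in bijection with $\cal V_i$, and the monomial attached to $W\in\cal V_i$ may be described via the inverse-system structure: a squarefree monomial $x_{k_1}\cdots x_{k_i}$ is nonzero in $\cal A$ precisely when $v_{k_1},\dots,v_{k_i}$ are linearly independent, and then it depends only on the span $W=\mathrm{Span}\langle v_{k_1},\dots,v_{k_i}\rangle$. In particular $x_j^\perp$ is (a nonzero scalar multiple of) the monomial attached to $v_j^\perp$, and the socle generator $g$ is the monomial attached to $\FF^n$. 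I would make this precise by recalling that in $Q/\Ann_Q(F)$ one has $m\circ F\neq 0$ for a degree-$n$ monomial $m=X_{k_1}\cdots X_{k_n}$ (up to the usual factorials) iff the corresponding $v_{k_\bullet}$ form a basis, i.e.\ iff $m\in\cal B$ in the notation of Definition \ref{Mac dual}.

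\textbf{Step 2: compute $x_i\cdot x_j^\perp$.} The product $x_i\cdot x_j^\perp$ is, up to a nonzero scalar, the image in $\cal A_n$ of the degree-$n$ monomial $x_i\prod_{v_k\in v_j^\perp}x_k$. If $v_i\in v_j^\perp$, this monomial is not squarefree (the factor $x_i$ already occurs), hence it lies in $\Ann_Q(F)$: indeed a monomial with a repeated variable kills the multilinear form $F_{{\mathcal V}(n,q)}$ under the differentiation action, so the product is $0$ in $\cal A_n$, giving coefficient $0=b_{ij}$. If instead $v_i\notin v_j^\perp$, then appending $v_i$ to a spanning set of $v_j^\perp$ produces $n$ linearly independent vectors spanning $\FF^n$; thus the monomial $x_i\prod_{v_k\in v_j^\perp}x_k$ represents the nonzero class $g$ in $\cal A_n$, and the coefficient is the nonzero value $b_{ij}=1$. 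The only thing to be careful about is \emph{scaling}: the canonical basis vectors $x_i^\perp$ and $g$ are chosen to be the literal monomials (coefficient $1$), so once we know the product is a nonzero multiple of $g$ we must confirm the multiple is exactly $1$; this follows because the product $x_i\cdot x_j^\perp$ is literally the squarefree degree-$n$ monomial $\prod_{v_k\in v_j^\perp\cup\{v_i\}}x_k$, which by the chosen normalization \emph{is} $g$ in $\cal A_n$. Assembling the two cases yields that the matrix of $\mu$ is $B$.

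\textbf{Main obstacle.} The genuine content is bookkeeping the identification of $\cal A_{n-1}$ and $\cal A_n$ with monomials and making sure the normalizations are compatible — i.e.\ that the ``canonical'' basis elements $x_j^\perp$ of $\cal A_{n-1}$ really are the monomials $\prod_{v_k\in v_j^\perp}x_k$ (not some scalar multiple forced by the inverse-system pairing) and similarly that $g$ is the genuine squarefree monomial, so that no stray factorial or scalar sneaks into the entries. Once that normalization is pinned down from the construction in \cite{MN}, the case analysis ``repeated variable $\Rightarrow 0$'' vs.\ ``spanning set of $\FF^n$ $\Rightarrow g$'' is immediate and finishes the proof.
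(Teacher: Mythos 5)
Your overall strategy coincides with the paper's (evaluate $\mu$ on pairs of canonical basis monomials and sort the outcomes into $0$ and $g$), but the step ``if $v_i\in v_j^\perp$ then the product monomial is not squarefree, since the factor $x_i$ already occurs'' is false as written, and it is exactly where the content of the proposition sits. The canonical basis element $x_j^\perp$ cannot be the literal product over \emph{all} $v_k\in v_j^\perp\cap{\cal V}_1$: that product has degree $\qbinom{n-1}{1}>n-1$ for $n\geq 3$ and is zero in $\cal A$. It must be (and in the paper's Example \ref{ex2} visibly is) a degree-$(n-1)$ monomial $x_{k_1}\cdots x_{k_{n-1}}$ with $\{v_{k_1},\dots,v_{k_{n-1}}\}$ a basis of $v_j^\perp$; e.g.\ $x_1^\perp=x_2x_4$ even though $v_1^\perp$ also contains $v_6$. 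Consequently $v_i\in v_j^\perp$ does \emph{not} imply $x_i\mid x_j^\perp$: for $n=3$, $q=2$ one has $v_6\in v_1^\perp$ while $x_6\cdot x_1^\perp=x_2x_4x_6$ is squarefree. In that sub-case your repeated-variable argument proves nothing, yet the corresponding entry of the matrix must still be $0=b_{ij}$.

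The gap is real but easily closed, and you already state the needed fact in your Step 1: a squarefree degree-$n$ monomial is zero in $\cal A$ precisely when the corresponding vectors are linearly dependent, because then the monomial is not in $\cal B$, its coefficient in $F_{{\mathcal V}(n,q)}$ vanishes, and it lies in $\Ann_Q(F_{{\mathcal V}(n,q)})$. When $v_i\in v_j^\perp$, the $n$ vectors $v_i,v_{k_1},\dots,v_{k_{n-1}}$ all lie in the hyperplane $v_j^\perp$ and hence are dependent whether or not $x_i$ repeats a factor, so the product is $0$ in either sub-case. (The paper's own proof reaches the same conclusion differently: since the class of $x_j^\perp$ in $\cal A$ is independent of the chosen basis of $v_j^\perp$ by \cite[Proposition 3.1]{MN}, one may, whenever $v_i\in v_j^\perp$, pick a representative monomial divisible by $x_i$ and then quote $x_i^2\in I$.) Your treatment of the nonzero case and of the normalization of $g$ is correct.
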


\begin{proof}
Since the squares of variables are in the ideal $I$, by \cite[Proposition 3.1]{MN}, we have that the action of $\mu$ on the pairs of basis elements is the following:
$$
\mu(x_i,x_j^\perp)=
\begin{cases}
0  &(x_i \ | \ x_j^\perp, \mbox{ equivalently } v_i\in v_j^\perp) \\
g &(x_i \not | x_j^\perp,\mbox{ equivalently } v_i\not\in v_j^\perp).
\end{cases}
$$
Clearly then $\mu$ is represented  as a bilinear form by $B$ with respect to the  bases $\cal B_1$ and $\cal B_{n-1}$ of  $\cal A_1$ and $\cal A_{n-1}$,  and the basis $\{ g \}$ for $\cal A_n$, where $g$ is a monomial generator of  $\cal A_n$.
\end{proof}

We are now ready to see how the Hessian relates to the matrices $A$ and $B$.

\begin{thm}\label{main2}
The Hessian matrix of $F_{{\mathcal V}_(q,n)}$ evaluated at $X_1=\ldots=X_n=1$ is
$$H(F_{{\mathcal V}_(q,n)})\vert_{X_1=\ldots=X_n=1} = \ \frac{t_{n-1,1,q}}{(n-2)!}AB.$$
\end{thm}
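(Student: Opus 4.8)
The plan is to read the commutative diagram of Lemma~\ref{commutdiag} off against matrix representations in the canonical bases. Specialize that lemma to $a_1=\dots=a_N=1$, so that the linear form becomes $L=\ell=x_1+\dots+x_N$. The top row of the diagram then factors the bilinear form $(n-2)!\,H(F_{{\mathcal V}(q,n)})\vert_{X_1=\dots=X_N=1}$ on $\cal A_1\otimes_K\cal A_1$ as the composite
$$\cal A_1\otimes_K\cal A_1\xrightarrow{(\times\ell^{n-2})\otimes\mathbf 1}\cal A_1\otimes_K\cal A_{n-1}\xrightarrow{\mu}\cal A_n\xrightarrow{\circ F}K.$$
So the strategy is: identify the matrix of each of the three arrows with respect to the canonical bases $\cal B_1=\{x_1,\dots,x_N\}$ of $\cal A_1$, $\cal B_{n-1}=\{x_1^\perp,\dots,x_N^\perp\}$ of $\cal A_{n-1}$, and $\{g\}$ of $\cal A_n$, and multiply them.

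First I would record that with respect to $\cal B_1$ and $\cal B_{n-1}$ the map $\times\ell^{n-2}\colon\cal A_1\to\cal A_{n-1}$ is represented by $M=t_{n-1,1,q}A$; this is exactly Theorem~\ref{multLn-2}. Second, Proposition~\ref{mu} says the bilinear multiplication $\mu\colon\cal A_1\otimes_K\cal A_{n-1}\to\cal A_n$, in the canonical bases and with $\{g\}$ as basis of $\cal A_n$, has matrix $B$; equivalently, composing the linear map $\cal A_{n-1}\to\cal A_n$, $x_j^\perp\mapsto \mu(x_i,x_j^\perp)$, one gets that the map $\cal A_1\otimes_K\cal A_{n-1}\to\cal A_n\cong K\!\cdot\! g$ pairs $x_i\otimes x_j^\perp$ to $b_{ij}g$. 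Third, the counit $\circ F\colon\cal A_n\to K$ sends the socle generator $g$ to a nonzero scalar; here one must pin down that $g\circ F=1$. Since $g$ is the monomial corresponding to $\FF^n\in\cal V_n$, i.e.\ $g=x_{k_1}\cdots x_{k_n}$ for one fixed basis $\{v_{k_1},\dots,v_{k_n}\}$ of $\FF^n$, applying $g=\partial^n/\partial X_{k_1}\cdots\partial X_{k_n}$ to $F_{{\mathcal V}(n,q)}=\sum_{\text{bases}}X_{i_1}\cdots X_{i_n}$ picks out exactly the single squarefree monomial $X_{k_1}\cdots X_{k_n}$ and differentiates it to $1$ (all other terms of $F$ either miss one of the indices or, being squarefree of degree $n$, cannot repeat one). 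Hence $\circ F$ has matrix $(1)$ in these bases.

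Composing, the bilinear form $(n-2)!\,H(F_{{\mathcal V}(q,n)})\vert_{\mathbf 1}$ on $\cal A_1\otimes_K\cal A_1$ is represented, with respect to $\cal B_1$ in each factor, by the product of the matrix of $(\times\ell^{n-2})\otimes\mathbf 1$, which is $t_{n-1,1,q}A$ acting on the first tensor factor, followed by $\mu$, which contributes $B$, followed by $\circ F$, which is the identity scalar. Care is needed with the bookkeeping: the bilinear form sends $x_i\otimes x_j$ to $(\circ F)\circ\mu\bigl((\times\ell^{n-2})x_i,\ x_j\bigr)$; writing $(\times\ell^{n-2})x_i=t_{n-1,1,q}\sum_k a_{ik}x_k^\perp$ and then $\mu(x_k^\perp? )$— more precisely using $\mu(x_k,x_j^\perp)=b_{kj}g$, one has to be attentive to which basis ($\cal B_{n-1}$ versus $\cal B_1$) sits in the second slot of $\mu$ and match indices through the symmetry $a_{ik}=a_{ki}$, $b_{kj}=b_{jk}$ of Lemma~\ref{row sum and column sum}(a). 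The upshot is that the $(i,j)$ entry is $t_{n-1,1,q}\sum_k a_{ik}b_{kj}$, i.e.\ the matrix is $t_{n-1,1,q}\,AB$. Dividing by $(n-2)!$ gives the claimed formula $H(F_{{\mathcal V}(q,n)})\vert_{X_1=\dots=X_n=1}=\tfrac{t_{n-1,1,q}}{(n-2)!}AB$.

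The main obstacle is the careful index matching in the last paragraph: the diagram is phrased with $\cal A_1\otimes\cal A_{n-1}$ in the middle but both outer factors are $\cal A_1$, so one has to be deliberate about the correspondence $x_j\leftrightarrow x_j^\perp$ that implicitly identifies the second copy of $\cal A_1$ with $\cal A_{n-1}$, and confirm that under this identification $\mu$ really is represented by $B$ rather than its transpose — which is harmless here since $B$ is symmetric, but should be stated. A secondary, minor point to verify cleanly is the normalization $g\circ F=1$, which depends on $g$ being the specific squarefree socle monomial and on $F$ being multilinear of degree $n$; this is where the hypothesis that each monomial in $F$ corresponds to a genuine basis of $\FF^n$ (hence has $n$ distinct indices) is used. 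Everything else is a direct assembly of Theorem~\ref{multLn-2}, Proposition~\ref{mu}, and Lemma~\ref{commutdiag}.
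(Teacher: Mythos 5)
Your proof is correct and follows essentially the same route as the paper's: combine Lemma~\ref{commutdiag} (specialized at $a_1=\dots=a_N=1$) with Theorem~\ref{multLn-2} and Proposition~\ref{mu} and multiply the matrices. Your extra care with the normalization $g\circ F=1$ and with the index bookkeeping (using the symmetry of $A$ and $B$) is detail the paper leaves implicit, but it does not change the argument.
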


\begin{proof}
It follows from Lemma~\ref{commutdiag} that the matrix of the Hessian is $\frac{1}{(n-2)!}$ times the product of the matrices of $\mu$ and $\times \ell^{n-2}$. Propositions \ref{mu} and Theorem \ref{multLn-2}, which give that the matrix representing $\mu$ is $B$ and the matrix representing $\times \ell^{n-2}$ is $t_{n-1,1,q}A$ respectively now finish the proof.
\end{proof}

\begin{cor}\label{det Hessian}
  The Hessian matrix of the dual socle generator $F_{{\mathcal V}(n,q)}$ evaluated at $X_1=X_2 = \ldots = X_N=1$ is given by
$$
H(F_{{\mathcal V}_(q,n)})\vert_{X_1=\ldots=X_n=1}=\Phi(N, 0,t_{n,2,q}).
$$
Hence the absolute value of the determinant for this matrix is
$$
\big|\det H(F_{{\mathcal V}_(q,n)}) \vert_{X_1=X_2 = \ldots = X_N=1}\big|=(N-1)t_{n,2,q}^N.
$$
\end{cor}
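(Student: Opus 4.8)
The plan is to derive Corollary~\ref{det Hessian} as a direct computational consequence of Theorem~\ref{main2} together with the structural facts about $A$, $B$, and $AB$ already established. By Theorem~\ref{main2}, the Hessian evaluated at $X_1=\cdots=X_N=1$ equals $\frac{t_{n-1,1,q}}{(n-2)!}AB$, and by Lemma~\ref{description of A and B}(c) we have $AB=\Phi(N,0,q^{n-2})$. Since scaling a $\Phi$-matrix by a scalar $c$ gives $c\Phi(N,0,q^{n-2})=\Phi(N,0,cq^{n-2})$, the first displayed formula reduces to verifying the scalar identity
$$\frac{t_{n-1,1,q}}{(n-2)!}\cdot q^{n-2}=t_{n,2,q}.$$
So the first step is this bookkeeping check: using the formula $t_{n-1,1,q}=q^{(n-1)(n-2)/2}\prod_{k=1}^{n-2}\qbinom{k}{1}$ from Proposition~\ref{the number of paths}(d) (applied with parameters $n-1$ and $j=1$) and the formula $t_{n,2,q}=q^{(n(n-1)-2)/2}\prod_{k=1}^{n-2}\qbinom{k}{1}$ (applied with parameters $n$ and $j=2$), one sees both sides share the factor $\frac{1}{(n-2)!}\prod_{k=1}^{n-2}\qbinom{k}{1}$ once we note $t_{n-1,1,q}/(n-2)!$ introduces exactly the $(n-2)!$ in the denominator — and the exponents of $q$ match since $(n-1)(n-2)/2+(n-2)=(n-2)\cdot n/2=(n(n-1)-2+?)$; more precisely $(n-1)(n-2)/2 + (n-2) = (n-2)(n-1+2)/2 = (n-2)(n+1)/2$, which one checks equals $(n(n-1)-2)/2$ since $n(n-1)-2=(n-2)(n+1)$. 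This confirms the coefficient is $t_{n,2,q}$.

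For the second step, computing $\bigl|\det H\bigr|$, I would simply apply Lemma~\ref{lemphi}(a) to $\Phi(N,0,t_{n,2,q})$: with $\alpha=0$ and $\beta=t_{n,2,q}$ this gives
$$\det\Phi(N,0,t_{n,2,q})=(0-t_{n,2,q})^{N-1}\bigl(N t_{n,2,q}+0-t_{n,2,q}\bigr)=(-1)^{N-1}t_{n,2,q}^{N-1}\,(N-1)\,t_{n,2,q},$$
whose absolute value is $(N-1)t_{n,2,q}^{N}$, as claimed.

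I do not anticipate a genuine obstacle here: the statement is a corollary, and all the heavy lifting (identifying $H$ with a scalar multiple of $AB$, and $AB$ with a $\Phi$-matrix) is done in Theorem~\ref{main2} and Lemma~\ref{description of A and B}. The only place requiring care is the combinatorial identity $\frac{t_{n-1,1,q}\,q^{n-2}}{(n-2)!}=t_{n,2,q}$ — specifically matching the powers of $q$, i.e.\ checking $(n-1)(n-2)/2+(n-2)=(n(n-1)-2)/2$, equivalently $(n-2)(n+1)=n(n-1)-2$, which is immediate upon expansion. One could alternatively bypass the explicit formulas and argue combinatorially that an ordered $(n-1)$-tuple spanning a fixed hyperplane and containing a fixed vector, together with a choice of one more vector outside that hyperplane (the $q^{n-2}$ factor counts vectors in a coset, up to the scaling already absorbed), corresponds to an ordered $n$-tuple basis of $\FF^n$ containing a fixed independent pair — but the direct formula substitution is cleaner to write. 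So the write-up will be short: invoke Theorem~\ref{main2} and Lemma~\ref{description of A and B}(c), verify the scalar, then invoke Lemma~\ref{lemphi}(a).
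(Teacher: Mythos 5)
Your route is exactly the paper's: invoke Theorem~\ref{main2} together with Lemma~\ref{description of A and B}(c) to write the evaluated Hessian as $\tfrac{t_{n-1,1,q}}{(n-2)!}\,\Phi(N,0,q^{n-2})=\Phi\bigl(N,0,\tfrac{t_{n-1,1,q}\,q^{n-2}}{(n-2)!}\bigr)$, then apply Lemma~\ref{lemphi}(a). The determinant step and the exponent check $(n-1)(n-2)/2+(n-2)=(n-2)(n+1)/2=(n(n-1)-2)/2$ are both correct.

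However, your scalar identity $\tfrac{t_{n-1,1,q}\,q^{n-2}}{(n-2)!}=t_{n,2,q}$ is false for $n\geq 4$, and your justification contradicts itself: you quote $t_{n,2,q}=q^{(n(n-1)-2)/2}\prod_{k=1}^{n-2}\qbinom{k}{1}$, which has \emph{no} factorial, and then assert that ``both sides share the factor $\tfrac{1}{(n-2)!}\prod_{k=1}^{n-2}\qbinom{k}{1}$.'' Only the left side carries the $\tfrac{1}{(n-2)!}$. What your computation actually produces is
$$\frac{t_{n-1,1,q}\,q^{n-2}}{(n-2)!}=\frac{q^{(n(n-1)-2)/2}}{(n-2)!}\prod_{k=1}^{n-2}\qbinom{k}{1}=s_{n,2,q},$$
i.e.\ the \emph{unordered} count from Proposition~\ref{the number of paths}(e), not the ordered one. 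This is consistent with a direct sanity check: the off-diagonal entry $\partial^2 F/\partial X_i\partial X_j$ at the all-ones point counts unordered bases of $\FF^n$ containing the independent pair $\{v_i,v_j\}$, which is $s_{n,2,q}$ (e.g.\ for $n=4$, $q=2$ this is $48$, whereas $t_{4,2,2}=96$). To be fair, the paper's own displayed verification computes precisely $s_{n,2,q}$ and then mislabels it $t_{n,2,q}$, so the corollary's statement contains the same typo; the two symbols coincide only when $(n-2)!=1$, which is why Example~\ref{ex3} ($n=3$) does not detect it. Your write-up should either correct the statement to $\Phi(N,0,s_{n,2,q})$ and $(N-1)s_{n,2,q}^N$, or at minimum not paper over the factorial with a claim that contradicts the formula you just cited.
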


\begin{proof} This follows from Theorem~\ref{main2} and Proposition~\ref{description of A and B}(c), after one notices
$$
\left(\frac{q^{n-2}}{(n-2)!}\right)\left(t_{n-1,1,q}\right)=
\left(\frac{q^{(n^2-n-2)/2}}{(n-2)!}\right)\left(\prod _{k=1}^{n-2} \left[\begin{matrix}k\\1\end{matrix}\right]_q\right)=t_{n,2,q}.
$$


The determinantal formula in Lemma \ref{lemphi}  finishes  the proof.
\end{proof}

We conclude the paper with a description of the zeroth Hessian of $F_{{\mathcal V}(n,q)}$ evaluated at $X_1=X_2 = \ldots = X_N=1$  which is by definition $F_{{\mathcal V}(n,q)}(1,1,\ldots,1)$  and its implications on the map $\ell^n: \cal A_0 \to \cal A_n$.

\begin{prop}\label{zeroHessian}
 Recall that $\ell=\frac{\partial}{\partial X_1}+\frac{\partial}{\partial X_2}+\ldots+\frac{\partial}{\partial X_N}\in Q$. Then
\begin{enumerate}[\rm(a)]
\item
the $K$-linear homomorphism  $ K F_{{\mathcal V}(n,q)}  \to K$ mapping $ F_{{\mathcal V}(n,q)} \mapsto \ell^n F_{{\mathcal V}(n,q)}$ is given by the formula
$$\ell ^nF_{{\mathcal V}(n,q)}=q^{\frac{1}{2}n(n-1)} \prod _{k=1} ^n \qbinom{k}{1}$$
\item
the homomorphism  $\times \ell ^n: \cal A_0 \to \cal A_n$ is given with respect to the bases $\cal B_0=\{1\}$ and $\cal B_n=\{g\}$ (where $g$ is any monomial in $\cal A_n$) by multiplication by the integer
$$q^{\frac{1}{2}n(n-1)}\prod _{k=1}^n \qbinom{k}{1}.$$
\end{enumerate}
\end{prop}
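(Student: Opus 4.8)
The plan is to prove part~(a) by expanding the constant-coefficient differential operator $\ell^n$ and a short counting argument, and then to deduce part~(b) from the Gorenstein duality built into the Macaulay correspondence.

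For part~(a), first expand
$$\ell^n=\Bigl(\sum_{i=1}^N\frac{\partial}{\partial X_i}\Bigr)^n=\sum_{(j_1,\dots,j_n)}\frac{\partial}{\partial X_{j_1}}\cdots\frac{\partial}{\partial X_{j_n}},$$
the outer sum running over all $n$-tuples $(j_1,\dots,j_n)\in\{1,\dots,N\}^n$. Since $F_{\mathcal{V}(n,q)}$ is a sum of $\card(\cal B)=s_{n,q}$ distinct square-free monomials of degree $n$, the key observation is that, for a square-free monomial $X_{i_1}\cdots X_{i_n}$, the operator $\frac{\partial}{\partial X_{j_1}}\cdots\frac{\partial}{\partial X_{j_n}}$ sends it to $1$ exactly when $(j_1,\dots,j_n)$ is a permutation of $(i_1,\dots,i_n)$ and to $0$ otherwise; there are exactly $n!$ such permutations. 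Interchanging the order of summation, each of the $s_{n,q}$ monomials of $F_{\mathcal{V}(n,q)}$ contributes $n!$, so that $\ell^nF_{\mathcal{V}(n,q)}=n!\,s_{n,q}=t_{n,q}$, and Proposition~\ref{the number of paths}(b) rewrites $t_{n,q}$ as $q^{n(n-1)/2}\prod_{k=1}^n\qbinom{k}{1}$, which is the asserted formula.

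For part~(b), recall that $\cal A_n$ is one-dimensional and that, by the very definition of $\Ann_Q(F_{\mathcal{V}(n,q)})$, the $K$-linear map $Q_n\to K$, $f\mapsto f\circ F_{\mathcal{V}(n,q)}$, has kernel $(\Ann_Q(F_{\mathcal{V}(n,q)}))_n$ and hence induces an isomorphism $\cal A_n\cong K$. A degree-$n$ monomial $g$ that is nonzero in $\cal A$ is necessarily square-free (squares of variables lie in the ideal) and corresponds to a basis of $\FF^n$, so it occurs in $F_{\mathcal{V}(n,q)}$ with coefficient $1$; thus $g\circ F_{\mathcal{V}(n,q)}=1$, i.e.\ the socle generator $g$ is identified with $1\in K$ under the isomorphism above. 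The homomorphism $\times\ell^n\colon\cal A_0\to\cal A_n$ sends $1$ to the class of $\ell^n$; writing that class as $c\,g$ and applying $\circ F_{\mathcal{V}(n,q)}$ gives $c=\ell^nF_{\mathcal{V}(n,q)}$, which is the integer computed in part~(a).

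The computations are short; the only points that require care are the combinatorial bookkeeping in part~(a)---that distinct monomials of $F_{\mathcal{V}(n,q)}$ have distinct index sets, so the per-monomial contributions of $n!$ simply add up to $n!\,s_{n,q}$---and, in part~(b), the normalization of the socle generator, namely verifying that the nonzero degree-$n$ monomials of $\cal A$ are exactly the basis monomials and that each pairs to $1$ against $F_{\mathcal{V}(n,q)}$, so that no extraneous scalar enters the final count.
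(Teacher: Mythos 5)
Your proof is correct and follows essentially the same route as the paper: part (a) is the same count (each square-free monomial of $F_{{\mathcal V}(n,q)}$ receives coefficient $n!$ from $\ell^n$, giving $n!\,s_{n,q}=t_{n,q}$), and part (b) is the paper's appeal to inverse-system duality, which you merely make explicit by normalizing the socle generator via $g\circ F_{{\mathcal V}(n,q)}=1$. No substantive difference.
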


\begin{proof}
(a) The coefficient of a square-free monomial in  $\ell^n$  is $n!$,  so acting by partial differentiation $\ell^nF_{{\mathcal V}(n,q)}=n!F_{{\mathcal V}(n,q)}(1,1,\ldots, 1)$. Since the number of monomials in $F_{{\mathcal V}(n,q)}$ is $F_{{\mathcal V}(n,q)}(1,1,\ldots, 1)=s_{n,q}$, Proposition \ref{the number of paths} ($c$) proves the first assertion.

(b) Since the  maps in (a) and (b) are dual to each other by the theory of inverse systems, it follows  that $ \times \ell^n: \cal A _0 \to \cal A _n $ is given by multiplication by the same integer as the map in (a). \end{proof}

\begin{disc} Our results in Proposition \ref{zeroHessian} and Corollary \ref{det Hessian} recover via Lemma \ref{commutdiag}  the non-vanishing of two of the determinants involved in the definition of the strong Lefschetz property (Definition \ref{def Lefschetz}) of the Gorenstein algebra $\cal A$. This algebra has been proven to have the strong Lefschetz property in \cite{MN}.
\end{disc}

\section*{Acknowledgments}
This paper started when the third author visited the Department of Mathematics at the University of Nebraska-Lincoln in April 2014. He is grateful to Luchezar Avramov for making this visit possible and for his hospitality. We are also grateful to the anonymous referee for his/her careful reading of our paper.

\bigskip

\providecommand{\bysame}{\leavevmode\hbox to3em{\hrulefill}\thinspace}
\providecommand{\MR}{\relax\ifhmode\unskip\space\fi MR }
\providecommand{\MRhref}[2]{%
  \href{http://www.ams.org/mathscinet-getitem?mr=#1}{#2}
}
\providecommand{\href}[2]{#2}

\bigskip

\end{document}